\numberwithin{equation}{section}
\newcommand{\ftextnumero}{{\fontfamily{txr}\selectfont \textnumero}}
\def\journal@id{~}
\def\journal@name{~}
\def\journal@url{~}
\definecolor{newgreen}{rgb}{0.1, 0.6, 0.1}
\definecolor{newblue}{rgb}{0.0, 0.1, 0.7}
\definecolor{ggreen}{rgb}{0.5, 0.85, 0.3}
\definecolor{rred}{rgb}{0.65, 0.2, 0.2}
\definecolor{ggray}{gray}{0.7}
\definecolor{bblue}{rgb}{0.0, 0.0, 1}
\definecolor{darkbrown}{rgb}{0.4, 0.26, 0.13}
\definecolor{medblue}{rgb}{0,0,.9}
\newtheorem{defi}{Definition}[section]
\newtheorem{lem}{Lemma}[section]
\newtheorem{cor}{Corollary}[section]
\newtheorem{pro}{Proposition}[section]
\newtheorem{theo}{Theorem}[section]
\newtheorem{exm}{Example}[section]
\newtheorem{rem}{Remark}[section]
\DeclareMathAlphabet{\mathpzc}{OT1}{pzc}{m}{it}
\newcommand{\fS}{\mathpzc{SV}}
\newcommand{\prob}{\mathbf{P}}
\newcommand{\esp}{\mathbf{E}}
\newcommand{\N}{\mathbb N}
\newcommand{\R}{\mathbb R}
\newcommand{\e}{\varepsilon}
\newcommand{\et}{\quad\mbox{and}\quad}
\newcommand{\rd}{\mathrm d}
\begin{document}
%------------------- BEGIN FRONT MATTER ------------------------------
\begin{frontmatter}

\title{ON THE OCCUPANCY PROBLEM FOR A REGIME SWITCHING MODEL}
\runtitle{REGIME SWITCHING MODEL}

\begin{aug}
  \author{\fnms{Michael}  \snm{Grabchak}\thanksref{e1}\ead[label=e1]{mgrabcha@uncc.edu}},
  \author{\fnms{Mark} \snm{Kelbert}\thanksref{e2}\ead[label=e2]{mkelbert@hse.ru}},
  \and
  \author{\fnms{Quentin}  \snm{Paris}\thanksref{e3} \ead[label=e3]{qparis@hse.ru}}

\runauthor{M.~Grabchak, M.~Kelbert, and Q.~Paris}

  \address[e1]{University of North Carolina Charlotte\\
Department of Mathematics and Statistics\\
Charlotte, NC, USA. \printead{e1}}

  \address[e2]{National Research University
Higher School of Economics (HSE)\\
Faculty of Economics, Department of Statistics and Data Analysis\\
Moscow, Russia. \printead{e2}}

  \address[e3]{National Research University
Higher School of Economics (HSE)\\
Faculty of Computer Science, School of Data Analysis and Artificial Intelligence \& HDI LAB\\
Moscow, Russia. \printead{e3}}

\end{aug}

\begin{abstract} 
This article studies the expected occupancy probabilities on an alphabet. Unlike the standard situation, where observations are assumed to be independent and identically distributed (iid), we assume that they follow a regime switching Markov chain. For this model, we 1) give finite sample bounds on the occupancy probabilities, and 2) provide detailed asymptotics in the case where the underlying distribution is regularly varying. We find that, in the regularly varying case, the finite sample bounds are rate optimal and have, up to a constant, the same rate of decay as the asymptotic result.
\end{abstract}

\begin{keyword}
\kwd{occupancy problem}
\kwd{regime switching}
\kwd{Markov chain}
\kwd{regular variation}
\end{keyword}

\end{frontmatter}

%------------------- END FRONT MATTER ------------------------------

\setcounter{tocdepth}{2}
{\small\tableofcontents}

\newpage

\section{Introduction} 

Let $A$ be a finite or countably infinite set and let $X=(X_{n})_{n\ge 1}$ be a discrete time $A$-valued stochastic process defined on some probability space $(\Omega, \mathcal F, \prob)$. We refer to set $A$ as the alphabet and to elements of $A$ as letters. These letters may represent different things in the context of different applications. For instance, in linguistics they may represents words in some language, while in ecology they may represent species in an ecosystem. From a general point of view, the occupancy problem (or urn scheme) is to describe the repartition of the process $(X_{n})_{n\ge 1}$ over the set $A$. In this context, two quantities of interest are
\begin{equation*}
L_n =\sum_{i=1}^{n}\mathbf 1\{X_{i}=X_{n+1}\} \et M_{n,r}=\prob\left\{\,L_n=r\,\vert\, X_{1},\dots,X_{n}\,\right\}.
\end{equation*}
These quantities are related by the fact that 
$$
\prob\{L_n=r\} = \esp M_{n,r}.
$$
In words, $L_{n}$ is the number of times that the letter observed at time $n+1$ had previously been observed and $M_{n,r}$ is the probability that, given the observations up to time $n$, the letter observed at time $n+1$ will have already been seen $r$ times. We refer to the quantities $M_{n,r}$ as the occupancy probabilities. The quantity $M_{n,0}$ is also sometimes called the missing mass. It corresponds to the probability of seeing a new letter at time $n+1$. In certain ecological contexts, it represents the probability of discovering a new species. While properties of $L_n$ and $M_{n,r}$ have been thoroughly studied in the context, where $X_1,X_2,\dots$ are independent and identically distributed (iid) random variables, we have seen no work in the literature relating to the case, where they follow a more general stochastic process.  In this paper, we give such results for a class of Markov chains, which form a regime switching model. This model expands the scope of potential applications. Moreover, it is our hope that this paper will stimulate interest in studying this problem in the context of other, more general, processes.

\subsection{Related Work} In the iid setting, the literature on the behavior of $L_{n}$, $M_{n,r}$, and related quantities is vast, see, for instance, the classic textbook \cite{Johnson:Kotz:1977}, the survey \cite{GHP07}, or recent contributions by \cite{BBO15} and \cite{DGP16}.  Applications include fields such as Ecology \citep{G53, GT56, C81, GS04}, Genomics \citep{ML02}, Language Processing \citep{CG99}, Authorship Attribution \citep{ET76, TE87,ZH07}, Information Theory \citep{OSZ04, BeBoGa16}, Computer Science \citep{Z05}, and Machine Learning \citep{BuErGa13,GZ17}. \\

We now briefly sketch several key results for the case where the random variables $X_1, X_2, \dots$ are iid with common distribution $P=\{p_a\}_{a\in A}$ on $A$. In this case, it is readily shown that 
\begin{equation*}
\prob\{L_n=r\} = \esp M_{n,r}=\binom{n}{r}\sum_{a\in A}p^{1+r}_a(1-p_a)^{n-r}.
\end{equation*}
This expression allows for a precise asymptotic analysis. Following \citet{K67}, it is understood that the main ingredients for this analysis are given by the counting measure $\boldsymbol{\nu}_{P}$ and the counting function $\nu$. These are defined, respectively, by
\begin{equation}\label{eq: count meas}
\boldsymbol{\nu}_{P}(\textrm{d}u)=\sum_{a\in A}\delta_{p_a}(\textrm{d}u)
\end{equation}
and
\begin{equation}\label{eq: count func}
\nu(\e)=\boldsymbol{\nu}_{P}([\e,1])=\sum_{k\ge 1}\mathbf 1\{p_{k}\ge\e\}, \quad 0\le \e\le 1.
\end{equation}
Next, recall that a function $\ell:(0,+\infty)\to \R$ is said to be slowly varying at $+\infty$ if for any $c>0$
\begin{eqnarray}\label{eq: sv}
\lim_{x\to +\infty}\frac{\ell(cx)}{\ell(x)}=1.
\end{eqnarray}
In this case, we write $\ell\in \fS$. With this notation, if $\nu(\e)=\boldsymbol{\nu}_{P}([\e,1])=\e^{-\alpha}\ell(1/\e)$ for some $\alpha\in(0,1)$ and some $\ell\in \fS$, then for $r\ge0$,
\begin{equation}\label{asympM}
\esp M_{n,r}\sim\frac{\alpha\Gamma(1+r-\alpha)}{r!} n^{-(1-\alpha)}\ell(n).
\end{equation}
This result is discussed, in greater detail, in the Appendix below. Non-asymptotic results are given in \cite{DGP16}. The main result of that paper is as follows.

\begin{lem}[Theorem 2.1 in \citealp{DGP16}]\label{lem:dgp}
Let $P=\{p_{k}\}_{k\ge1}$ be a probability measure on $\mathbb N_{+}$ with counting function $\nu$.  For all $n\ge 1$, all $0\le r\le n-1$, and all $0\le \e\le 1$,
$$
\prob\{L_n=r\} =\esp M_{n,r}=\binom{n}{r}\sum_{a\in A}p^{1+r}_a(1-p_a)^{n-r} \le \frac{c(r)\nu(\e)}{n}+ 2^{1+r}\binom{n}{r}\int_{0}^{\e}\nu\left(\frac u2\right)u^r\left(1-\frac u2\right)^{n-r}{\rm d}u,
$$
where 
\begin{equation}\label{cr}
c(r)=\bigg\{\begin{array}{ll}
e^{-1} &\mbox{ if }\ r=0,\\
e(1+r)/\sqrt{\pi}&\mbox{ if }\ r\ge 1.
\end{array}
\end{equation}
\end{lem}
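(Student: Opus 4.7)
The equality $\esp M_{n,r}=\binom{n}{r}\sum_{a\in A}p_a^{1+r}(1-p_a)^{n-r}$ is immediate from the iid assumption: conditioning on the value of $X_{n+1}$ and using independence gives $\prob\{L_n=r\}=\sum_{a\in A}p_a\binom{n}{r}p_a^r(1-p_a)^{n-r}$. The substantive content is the inequality. The plan is to split the sum at the threshold $\e$, writing $\sum_{a\in A}=\sum_{a:\,p_a\ge \e}+\sum_{a:\,p_a<\e}$, and estimate the two pieces separately.

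For the large letters (those with $p_a\ge \e$), there are exactly $\nu(\e)$ summands. I would dominate each by the global maximum over $p\in[0,1]$ of the map $p\mapsto\binom{n}{r}p^{1+r}(1-p)^{n-r}$. A routine derivative calculation locates this maximum at $p^\ast=(r+1)/(n+1)$, so the task reduces to verifying that
$$\binom{n}{r}\left(\frac{r+1}{n+1}\right)^{r+1}\left(\frac{n-r}{n+1}\right)^{n-r}\le\frac{c(r)}{n}.$$
For $r=0$ this simplifies to $\tfrac{1}{n+1}(1-\tfrac{1}{n+1})^n\le e^{-1}/n$, which follows from the elementary bound $(1-x)e^x\le 1$. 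For $r\ge 1$ I would apply Stirling's formula with explicit error bounds to extract the constant $c(r)=e(1+r)/\sqrt{\pi}$ from (\ref{cr}). Multiplying by $\nu(\e)$ produces the first term of the asserted bound.

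For the small letters (those with $p_a<\e$), I would exploit the integral identity
$$p_a^{1+r}=(1+r)\int_0^{p_a}u^r\,\rd u=(1+r)\int_0^{\e}\mathbf 1\{u\le p_a\}\,u^r\,\rd u,$$
valid because $p_a<\e$, together with the monotonicity $(1-p_a)^{n-r}\le(1-u)^{n-r}\le(1-u/2)^{n-r}$ on $\{u\le p_a\}$. Interchanging sum and integral via Fubini--Tonelli,
$$\sum_{a:\,p_a<\e}p_a^{1+r}(1-p_a)^{n-r}\le(1+r)\int_0^\e\#\{a:u\le p_a<\e\}\,u^r(1-u/2)^{n-r}\,\rd u.$$
Since $\nu$ is non-increasing, $\#\{a:u\le p_a<\e\}\le\nu(u)\le\nu(u/2)$, and since $1+r\le 2^{1+r}$, multiplying by $\binom{n}{r}$ yields the integral term of the statement. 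Adding the two contributions closes the proof.

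The principal obstacle is the sharp uniform estimate in the large regime, namely $\binom{n}{r}p^{1+r}(1-p)^{n-r}\le c(r)/n$ with the precise constants in (\ref{cr}). The $r=0$ case is a one-line calculation, but $r\ge 1$ needs a careful non-asymptotic use of Stirling's formula; the somewhat peculiar factor $e/\sqrt{\pi}$ arises as a trace of the $\sqrt{2\pi n}$ and $e^{\theta/(12n)}$ Stirling corrections combined with the Gaussian-type behaviour of the binomial density at its mode.
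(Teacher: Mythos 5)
The paper itself does not prove this lemma --- it is imported verbatim as Theorem 2.1 of \citet{DGP16} --- so the only meaningful comparison is with the proof in that reference, and your reconstruction follows essentially the same route: the equality by conditioning on $X_{n+1}$, the split of the sum at $\e$, the uniform mode bound $\sup_{p\in[0,1]}\binom{n}{r}p^{1+r}(1-p)^{n-r}\le c(r)/n$ applied to the $\nu(\e)$ large masses, and the representation $p^{1+r}=(1+r)\int_0^{p}u^r\,\rd u$ plus Tonelli for the small ones; your derivation of the integral term is valid, with $2^{1+r}$ and $\nu(u/2)$ entering only through the crude inequalities $1+r\le 2^{1+r}$ and $\nu(u)\le\nu(u/2)$. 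The one step you leave as a sketch --- the Stirling verification of the mode bound with constant $e(1+r)/\sqrt{\pi}$ for $r\ge 1$ --- is precisely the separate technical lemma in \citet{DGP16}, and it goes through with room to spare: from $n!\le\sqrt{2\pi n}\,(n/e)^{n}e^{1/(12n)}$, $m!\ge\sqrt{2\pi m}\,(m/e)^{m}$, $(1+1/r)^{r}\le e$, and $\left(n/(n+1)\right)^{n+1}\le e^{-1}$ one obtains $\binom{n}{r}(p^{*})^{1+r}(1-p^{*})^{n-r}\le e^{1/12}(1+r)\big/\big(\sqrt{2\pi}\sqrt{n\,r(n-r)}\big)$, which is below $e(1+r)/(\sqrt{\pi}\,n)$ since $r(n-r)\ge n-1$ for $1\le r\le n-1$. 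So the proposal is correct and there is no structural gap.
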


\vspace{.4cm}

\subsection{Regime Switching Model} 

A natural extension of the iid case is to a class of regime switching Markov chains or regime switching models. In this context the elements in $A$ no longer represent letters, but entire alphabets. Each $a\in A$ represents an alphabet, which we denote by $\{a\}\times\N_+$, where $\N_+=\{1,2,\dots\}$. This alphabet has its own distribution $P_a=\{p_{a,k}\}_{k\ge1}$, and we assume that observations from each alphabet 
are iid with distribution $P_a$. However, we randomly perform transitions between alphabets following a Markov chain with transition operator $Q$. Formally, we consider a Markov chain $Z=(Z_n)_{n\ge 1}$ on the product space $\mathcal A:=A\times \N_+$ with transition operator $\mathcal Q$ defined by 
\begin{equation}\label{Q intro}
\mathcal Q((a',k'),(a,k))=Q(a',a)p_{a,k}, \ \ \ \ a,a'\in A, \ \ k,k'\in\N_+.
\end{equation}
In the interest of generality, we sometimes consider the case where transitions between alphabets do not follow a Markov chain, but a more general process. Nevertheless, our motivation comes from the case where the transitions are Markovian. Such situations can be used to describe a variety of situations, such as: 
\begin{itemize}
\item[1.] (Classics) A researcher reads documents in an antique library. The documents are written in a variety of languages (e.g.\ Latin, Greek, Hebrew, etc.). Assume that transitions between documents written in different languages follow a Markov chain. Here the regime switching Markov chain $(Z_n)_{n\ge1}$ represents the sequence of ordered pairs comprised of the word that the researcher is currently reading and the language that the current document is written in. 
In this context, the missing mass represents the probability that the next word that the researcher encounters will be one that this researcher has not  previously seen and will thus need to look up.
\item[2.] (Ecology) An ecologist is observing the animals that are found in a certain plot of forest. However, the forest has several states (e.g.\ time of day, weather, etc.) with transitions between these following a Markov chain. To understand the difference in the distribution of species found under different states, the ecologist keeps track of both the species of the observed animal and the state of the forest. 
\item[3.] (Computer Science) 
A server periodically enters a state where there is a serious hacking attempt. Assume that transitions into and out of this state follow a Markov chain.  To understand the effect of a serious hacking attempt on the number of packets that arrive, a researcher keeps track the number of packets that arrive in increments of, say, five minutes along with the state of the server in that time period.
\item[4.] (Economics) An economy can be in one of several states, e.g.\ growth, recession, inflation, etc. One can model transitions between these states using a Markov chain.  To understand the effect of the state of the economy on some economic indicator (e.g.\ the number of bank failures in a week) an economist keeps track of both the indicator and the state of the economy. 
\end{itemize}

\subsection{Organization}  The main goal of this paper is to extend the results given in Equation \eqref{asympM} and Lemma \ref{lem:dgp} from the iid case to the regime switching model. We begin by giving results for a simple class of Markov chains, which will drive this model. 
Toward this end, we introduce a useful technical result in Section \ref{section:prelim}, and then, in Section \ref{section:Finite}, we consider the case of an ergodic Markov chain on a finite state space. In Section \ref{section:sa}, we formally define the regime switching model and give extensions of Lemma \ref{lem:dgp}. In the interest of generality, most results in this section do not assume that transitions between alphabets are Markovian. However, this assumption is needed for the more detailed results. Then, in Section \ref{sec:asymp}, we extend \eqref{asympM} to the case of the regime switching model. Proofs are postposed to Section \ref{sec:proofs}. A brief review of basic properties of regularly varying distributions on an alphabet is given in the Appendix.

\subsection{Notation} Before proceeding we set up some notation. We write $\mathbf 1\{...\}$ to denote the indicator function of event $\{...\}$. For a set $A$, we write $|A|$ to denote the cardinality of $A$. For real numbers $a,b\in\mathbb R$, we write $a\vee b$ or $\max\{a,b\}$ to denote the maximum of $a$ and $b$ and we write $a\wedge b$ or $\min\{a,b\}$ to denote the minimum of $a$ and $b$. For two sequences $g(n)$ and $h(n)$ we write $g(n)\sim h(n)$ to mean $g(n)/h(n)\to1$ as $n\to\infty$. We write $\Gamma(x) = \int_0^\infty u^{x-1}e^{-u} \rd u$ for $x>0$ to denote the gamma function.

\section{Preliminaries}\label{section:prelim}

In this section we introduce a technical result, which will be useful in the sequel. Toward this end, fix a finite or countably infinite set $A$, a Markov transition operator $Q$ on $A$, and a probability measure $\mu$ on $A$. 
Let $X=(X_n)_{n\ge 1}$ be an $A$-valued random process defined on some probability space $(\Omega,\mathcal F,\prob_\mu)$ such that $X$ is a $Q$-Markov chain with initial distribution $\mu$.  We write  $\esp_{\mu}$ to denote the expectation under $\prob_{\mu}$. We write $Q^t$ to denote the $t$-step transition operator of the Markov chain. For all integers $n\ge 1$ and $a\in A$, we set 
\begin{equation}
\label{Lna}
L_n(a):=\sum_{i=1}^{n}\mathbf 1\{X_i=a\},
\end{equation}
to be the local time of Markov chain $X$ in state $a$, 
and we set
\begin{equation}
\label{Ln}
L_n:=\sum_{i=1}^{n}\mathbf 1\{X_i=X_{n+1}\}=L_n(X_{n+1})
\end{equation}
to denote the number of times that the state visited at time $n+1$ had been visited up to time $n$. \\

We now give a result, which connects the distribution of $L_n$ with that of the local times of the reversed chain. We assume that the $Q$-Markov chain $(X_n)_{n\ge1}$ is irreducible, aperiodic, positive recurrent, and has stationary distribution $\pi=(\pi_a)_{a\in A}$. We denote by $\hat X=(\hat X_n)_{n\ge 1}$ the associated reversed chain, i.e.\ an $A$-valued Markov chain with transition operator $\hat Q$ defined by
 \[
\hat Q(x,y):=\frac{\pi(y)Q(y,x)}{\pi(x)}.
\] 
It is easy to check that $\pi$ is also the stationary distribution of $\hat X$ and that the $t$-step transition operator of the reversed chain is given by
 \begin{eqnarray}\label{eq: t step rev}
\hat Q^t(x,y) = \frac{\pi(y)Q^t(y,x)}{\pi(x)}.
\end{eqnarray}
We say that the chain $X$ is reversible when $\pi(x)Q(x,y)=\pi(y)Q(y,x)$. In this case $\hat Q=Q$ and the chains $X$ and $\hat X$ have the same distribution given an initial distribution. We write $\hat L_n(a)$  to denote the local time of the reversed chain at $a$, i.e.
\[
\hat{L}_n(a):=\sum_{i=1}^{n}\mathbf 1\{\hat X_i=a\}.
\]

\begin{lem}\label{lem:taun}
Let $A$ be a finite or countably infinite set. Suppose $X=(X_n)_{n\ge 1}$ is an irreducible, aperiodic, and positive recurrent Markov chain on $A$ with stationary distribution $\pi$ and reversed chain $\hat X$. Let $\mu$ and $\eta$ be arbitrary distributions on $A$. Then, for any positive measurable function $f$ and all integers $n\ge 1$, 
\[
\esp_{\mu}\left[\frac{\eta(X_{n+1})}{\pi(X_{n+1})}f(L_n)\right]=\esp_{\eta}\left[\frac{\mu(\hat X_{n+1})}{\pi(\hat X_{n+1})}f(\hat L_{n+1}(\hat X_1)-1)\right],
\]
where, on the right-hand side, it is understood that $\eta$ is taken as the initial distribution of the reversed chain, i.e. it is the distribution of $\hat X_1$.
\end{lem}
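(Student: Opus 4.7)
The plan is to prove the identity by a direct time-reversal computation, expanding both sides as sums over all possible trajectories and relating them via the reversal relation.

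First, I would expand the left-hand side as
\[
\esp_{\mu}\left[\frac{\eta(X_{n+1})}{\pi(X_{n+1})}f(L_n)\right] = \sum_{x_1,\dots,x_{n+1}} \mu(x_1)\prod_{i=1}^{n}Q(x_i,x_{i+1})\,\frac{\eta(x_{n+1})}{\pi(x_{n+1})}\,f\!\left(\sum_{i=1}^{n}\mathbf{1}\{x_i=x_{n+1}\}\right).
\]
The key algebraic step is to rewrite each forward transition via the reversal identity $Q(x_i,x_{i+1})=\pi(x_{i+1})\hat Q(x_{i+1},x_i)/\pi(x_i)$. When the resulting product is taken over $i=1,\dots,n$, the factors $\pi(x_{i+1})/\pi(x_i)$ telescope to $\pi(x_{n+1})/\pi(x_1)$, so that
\[
\mu(x_1)\prod_{i=1}^{n}Q(x_i,x_{i+1})\frac{\eta(x_{n+1})}{\pi(x_{n+1})} = \frac{\mu(x_1)}{\pi(x_1)}\,\eta(x_{n+1})\,\prod_{i=1}^{n}\hat Q(x_{i+1},x_i).
\]

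Next, I would perform the change of variables $y_i:=x_{n+2-i}$, which reverses time. Under this reindexing $y_1=x_{n+1}$, $y_{n+1}=x_1$, and $\prod_{i=1}^{n}\hat Q(x_{i+1},x_i)=\prod_{j=1}^{n}\hat Q(y_j,y_{j+1})$, so the weight becomes exactly the trajectory weight of the reversed chain $\hat X$ started from $\eta$. The factor $\eta(x_{n+1})=\eta(y_1)$ plays the role of the initial distribution of $\hat X$, while $\mu(x_1)/\pi(x_1)=\mu(y_{n+1})/\pi(y_{n+1})$ becomes the functional evaluated at $\hat X_{n+1}$.

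The remaining point is to track what happens to $L_n$: we have
\[
\sum_{i=1}^{n}\mathbf{1}\{x_i=x_{n+1}\}=\sum_{j=2}^{n+1}\mathbf{1}\{y_j=y_1\}=\sum_{j=1}^{n+1}\mathbf{1}\{y_j=y_1\}-1,
\]
which in terms of $\hat X$ is exactly $\hat L_{n+1}(\hat X_1)-1$. Reassembling the sum gives the right-hand side. The main (minor) obstacle is purely bookkeeping, namely aligning the index range of the telescoping product with the shifted index range of the local time and verifying that the ``boundary'' weights $\mu/\pi$ and $\eta/\pi$ swap endpoints correctly; nothing analytic is needed since $f\ge 0$ makes every rearrangement legitimate by Tonelli. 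The case of general finite measurable $f$ (if desired) then follows by splitting into positive and negative parts.
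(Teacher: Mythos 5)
Your proposal is correct and follows essentially the same route as the paper: expand the expectation over trajectories, convert each $Q(x_i,x_{i+1})$ to $\pi(x_{i+1})\hat Q(x_{i+1},x_i)/\pi(x_i)$, telescope, reverse the indexing, and observe that the local-time sum picks up the extra self-count at $\hat X_1$, giving $\hat L_{n+1}(\hat X_1)-1$. The only cosmetic difference is that the paper first isolates the reversal identity for a general functional $g(X_1,\dots,X_{n+1})$ and then specializes to $g=f(\sum_i\mathbf 1\{x_i=x_{n+1}\})$, whereas you carry $f(L_n)$ through directly.
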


\begin{rem}
Note, in particular, that taking $\mu=\eta=\pi$ in the above formula, and supposing the chain to be reversible, we get that for any positive measurable $f$,
\[\esp_\pi f(L_n)=\esp_\pi f(L_{n+1}(X_1)-1),\] 
so that, under $\prob_{\pi}$, $L_n$ has the same distribution as $L_{n+1}(X_1)-1$. 
\end{rem}

\section{Finite Markov Chains}\label{section:Finite}

In this section we provide a bound on $\prob_\mu\{L_n= r\}$ in the context of an ergodic Markov chain on a finite state space. This result is interesting in itself, and it will be important in the sequel because such models will drive our regime switching model. Let $X=(X_n)_{n\ge 1}$ be an irreducible and aperiodic Markov chain with finite state space $A$, transition matrix $Q$, and stationary distribution $\pi=(\pi_a)_{a\in A}$. This implies that there exists an integer $t_0\ge1$ such that
\begin{eqnarray}\label{eq:t0}
Q^{t_0}(a,b) >0 \mbox{ for all } a,b\in A.
\end{eqnarray}
From \eqref{eq: t step rev}, it follows that $\hat Q^{t_0}(a,b) >0$ for all  $a,b\in A$. Let 
\begin{eqnarray}\label{eq: lambda}
\ell= \min_{a,b}Q^{t_0}(a,b), \ \hat\ell= \min_{a,b}\hat Q^{t_0}(a,b),\ \mbox{ and } \lambda = |A|\min\{\ell,\hat\ell\},  
\end{eqnarray}
where $|A|$ is the cardinality of $A$. Note that $0<\lambda\le 1$ and that, for each $a\in A$,
\begin{eqnarray}\label{eq: bound on unif}
Q^{t_0}(a,\cdot) \ge \lambda u(\cdot) \mbox{   and   } \hat Q^{t_0}(a,\cdot) \ge \lambda u(\cdot),
\end{eqnarray}
where $u$ is the uniform distribution on $A$. By Theorem 8 in \cite{Roberts:Rosenthal:2004}, this implies that for every $a\in A$
$$
\max_{B\subset A}|Q^n(a,B)-\pi(B)| \le (1-\lambda)^{n/t_0-1}, \ \ n=1,2,\dots.
$$
This results continues to hold if $Q$ is replaced by $\hat Q$. In this context, Theorem 2 of \cite{Glynn:Ormoneit:2002} gives the following concentration inequality for $L_n(a)$. 

\begin{lem}\label{lem: concentration}
 If $\lambda>0$ and $t_0$ are such that \eqref{eq: bound on unif} holds, then for any $a\in A$, any $\gamma>0$, and any initial distribution $\mu$ we have
\[
\prob_{\mu}\left\{L_n(a)-\esp_{\pi}L_n(a)\ge n\gamma\right\} \vee \prob_{\mu}\left\{L_n(a)-\esp_{\pi}L_n(a)\le -n\gamma\right\}
\le \exp\left(-\frac{n}{2}\left(\frac{\lambda\gamma}{t_0} - \frac{2}{n} \right)^2\right),
\]
for $n>\frac{2t_0}{\lambda\gamma}$. 
\end{lem}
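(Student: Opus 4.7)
The plan is to invoke Theorem 2 of Glynn and Ormoneit (2002) directly, as announced just before the statement; the bulk of the work consists in checking that its hypotheses are satisfied in the present setting. Recall that their theorem gives a Hoeffding-type concentration inequality for bounded additive functionals of a Markov chain satisfying a uniform minorization. Here we write $L_n(a)=\sum_{i=1}^{n}f(X_i)$ with $f=\mathbf 1\{\,\cdot\,=a\}$, which is bounded in $[0,1]$ and has $\esp_\pi f(X_i)=\pi(a)$, so that $\esp_\pi L_n(a)=n\pi(a)$. The oscillation $\|f\|_{\infty}\le 1$ is the constant that feeds into Hoeffding's inequality.

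Next I would verify the minorization hypothesis required by Glynn--Ormoneit, namely that there exist $t_0\ge 1$, $\lambda\in(0,1]$, and a probability measure $u$ on $A$ such that $Q^{t_0}(x,\cdot)\ge \lambda u(\cdot)$ for every $x\in A$. This is precisely the content of \eqref{eq: bound on unif}, with $u$ the uniform distribution on $A$ and $\lambda$, $t_0$ defined in \eqref{eq: lambda}. Importantly, since the bound is uniform in the starting state, the conclusion holds for an arbitrary initial distribution $\mu$, as claimed. Substituting $\lambda$, $t_0$, and $\|f\|_\infty\le 1$ into the Glynn--Ormoneit upper-tail bound yields the inequality for $\prob_\mu\{L_n(a)-\esp_\pi L_n(a)\ge n\gamma\}$; the matching lower-tail bound follows by applying the same theorem to $-f$, which has the same oscillation.

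If one preferred a self-contained derivation rather than a citation, the standard route would be to solve the Poisson equation $g-Qg=f-\pi(f)$, obtaining a solution with $\|g\|_\infty$ controlled by $t_0/\lambda$ thanks to the uniform geometric ergodicity coming from \eqref{eq: bound on unif}; then decompose $\sum_{i=1}^{n}(f(X_i)-\pi(a))$ as a bounded martingale $\sum_{i}[g(X_i)-Qg(X_{i-1})]$ plus boundary terms $g(X_1)-g(X_{n+1})$, and apply the Azuma--Hoeffding inequality to the martingale. The one delicate point, which would be the main obstacle in a direct proof, is bookkeeping the constants so that the factor $\lambda/t_0$ appears in the exponent and the boundary terms $g(X_1)-g(X_{n+1})$ account exactly for the $-2/n$ correction inside the square; this is precisely what Glynn and Ormoneit carry out, so invoking their theorem is both the cleanest and the intended strategy.
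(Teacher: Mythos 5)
Your proposal matches the paper exactly: the paper offers no separate proof of Lemma \ref{lem: concentration}, presenting it as a direct consequence of Theorem 2 of \cite{Glynn:Ormoneit:2002} once the minorization condition \eqref{eq: bound on unif} is in place, which is precisely what you verify (with $f=\mathbf 1\{\cdot=a\}$ bounded by $1$, so that $\frac{\lambda^2(n\gamma-2t_0/\lambda)^2}{2nt_0^2}=\frac{n}{2}\bigl(\frac{\lambda\gamma}{t_0}-\frac{2}{n}\bigr)^2$ under the stated condition $n>2t_0/(\lambda\gamma)$). Your sketch of the Poisson-equation/Azuma route is a faithful description of how Glynn and Ormoneit themselves prove their theorem, so nothing is missing.
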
 

Clearly, the above holds for both the chain $X$ and the reversed chain $\hat X$. Similar concentration inequalities can be obtained by applying Corollary 2.10 and Remark 2.11 in \citet{Paulin:2015}. Combining this with Lemma \ref{lem:taun} gives the following.

\begin{pro}\label{prop: main for finite}
For $n>\frac{2t_0+r\lambda  + \lambda(1-\pi_\wedge)}{\lambda\pi_\wedge}$ and any initial distribution $\mu=(\mu_a)_{a\in A}$, we have
\begin{eqnarray*}
\prob_\mu\{L_n= r\}\le \prob_\mu\{L_n\le r\} \le C \exp\left(-\frac{n}{2}\left(\frac{\lambda\pi_\wedge}{t_0} - \frac{2+(r+1)\lambda/t_0}{n} \right)^2\right),
\end{eqnarray*}
where $t_0$ and $\lambda$ are as above, $\pi_\wedge=\min_{a\in A}\pi_a$, and $C = |A|\wedge\max_{a\in A}(\mu_a/\pi_a)$.
\end{pro}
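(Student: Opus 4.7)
The plan is to combine Lemma~\ref{lem:taun} with the concentration inequality Lemma~\ref{lem: concentration}, applied to both the forward chain $X$ and the reversed chain $\hat X$. Since $\{L_n = r\} \subseteq \{L_n \le r\}$, the first inequality in the statement is immediate, and I will obtain two separate upper bounds on $\prob_\mu\{L_n \le r\}$ whose minimum yields the constant $C = |A| \wedge \max_a(\mu_a/\pi_a)$.

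For the $\max_a(\mu_a/\pi_a)$ component of $C$, I apply Lemma~\ref{lem:taun} with $\eta = \pi$ and $f(x) = \mathbf{1}\{x \le r\}$, which rewrites
\[
\prob_\mu\{L_n \le r\} = \esp_\pi\left[\frac{\mu(\hat X_{n+1})}{\pi(\hat X_{n+1})} \mathbf{1}\{\hat L_{n+1}(\hat X_1) \le r+1\}\right] \le \max_{a \in A}\frac{\mu_a}{\pi_a}\sum_{a' \in A}\pi_{a'}\,\prob_{\delta_{a'}}\{\hat L_{n+1}(a') \le r+1\},
\]
where the last step uses that $\hat X_1 \sim \pi$ under $\prob_\pi$. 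For each $a'$, Lemma~\ref{lem: concentration} applied to $\hat X$ at sample size $n+1$ and deviation $\gamma_{a'} = \pi_{a'} - (r+1)/(n+1)$ (chosen so that $(n+1)\pi_{a'} - (n+1)\gamma_{a'} = r+1$) produces the exponent $-\frac{n+1}{2}(\lambda\gamma_{a'}/t_0 - 2/(n+1))^2$, whose worst case is $\pi_{a'} = \pi_\wedge$; summing over $a'$ contributes only the prefactor $\sum_{a'}\pi_{a'} = 1$. For the $|A|$ component of $C$, I use instead the direct decomposition $\prob_\mu\{L_n \le r\} \le \sum_{a \in A}\prob_\mu\{L_n(a) \le r\}$ and apply Lemma~\ref{lem: concentration} to the forward chain with deviation $\gamma_a = \pi_a - r/n$, again at worst case $\pi_a = \pi_\wedge$.

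The two derived exponents are then weakened to the single exponent in the proposition. Using $n+1 \ge n$ one has $-\frac{n+1}{2}a^2 \le -\frac{n}{2}a^2$ for $a > 0$, and replacing $r$ by $r+1$ inside the union-bound exponent only makes it less negative, so both candidate bounds are dominated by $C\exp(-\frac{n}{2}(\lambda\pi_\wedge/t_0 - (2 + (r+1)\lambda/t_0)/n)^2)$. The hypothesis $n > (2t_0 + r\lambda + \lambda(1-\pi_\wedge))/(\lambda\pi_\wedge)$ rearranges to $(n+1)\lambda\pi_\wedge > 2t_0 + \lambda(r+1)$, which is precisely the positivity condition needed to invoke Lemma~\ref{lem: concentration} in the change-of-measure route with $\gamma = \pi_\wedge - (r+1)/(n+1) > 0$ at sample size $n+1$. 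The main technical difficulty will lie in this bookkeeping step: reconciling the $n$ versus $n+1$ and the $r$ versus $r+1$ discrepancies between the two approaches and verifying that the single uniform exponent stated in the proposition simultaneously dominates both.
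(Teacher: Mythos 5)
Your proposal follows essentially the same route as the paper's proof: the $|A|$ prefactor comes from the union bound $\prob_\mu\{L_n\le r\}\le\sum_a\prob_\mu\{L_n(a)\le r\}$ with Lemma~\ref{lem: concentration} applied to the forward chain at deviation $\pi_a-r/n$, and the $\max_a(\mu_a/\pi_a)$ prefactor comes from Lemma~\ref{lem:taun} with $\eta=\pi$ followed by Lemma~\ref{lem: concentration} for the reversed chain at sample size $n+1$ and deviation $\pi_{a'}-(r+1)/(n+1)$, exactly as in the paper. The final bookkeeping you flag (reconciling $n$ versus $n+1$ and $r$ versus $r+1$ into the single stated exponent) is treated with the same brevity in the paper itself, which simply combines its two displayed bounds, so your outline is faithful to the published argument.
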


In particular, note that, when $\mu=\pi$ the constant $C=1$. It is straightforward to check that the asymptotic behavior of the upper bound is given by
$$
C  \exp\left(-\frac{n}{2}\left(\frac{\lambda\pi_\wedge}{t_0} - \frac{2+(r+1)\lambda/t_0}{n} \right)^2\right) \sim C'\exp\left(-n\frac{\lambda^2\pi_\wedge^2}{2t_0^2} \right) \mbox{  as  }  n\to\infty,
$$
where $C'= C \exp\left( t_0^{-2}\lambda\pi_\wedge (2t_0+(r+1)\lambda)\right)$. 

\begin{rem}
It may be interesting to note that Proposition \ref{prop: main for finite} gives a bound with exponential decay. This holds, in particular, for the special case, where $X_1,X_2,\dots$ are iid random variables. In comparison, Corollary 2.1 of \cite{DGP16} focuses on the iid case and only gives the bound 
$$
\prob_\mu\{L_n= r\} \le c(r)\frac{|A|}{n}, \ \ \ 0\le r\le n-1,
$$ 
where $c(r)$ is given by \eqref{cr}.
\end{rem}

The proof of Proposition \ref{prop: main for finite} depends heavily on the assumption of a finite alphabet. While concentration inequalities for the local times of Markov chains in the case of infinite alphabets are well-known and can be found in e.g.\ \cite{Glynn:Ormoneit:2002} and \citet{Paulin:2015}, there does not appear to be a simple way to transform these into bounds on $\prob_\mu\{L_n=r\}$.  The issue comes from the fact that we need $\pi_\wedge>0$, but it is always zero when $A$ is an infinite set. An interesting situation, where we are able to deal with infinite alphabets, is the regime switching model. This is  the focus of the remainder of this paper.

\section{Regime Switching Model}\label{section:sa} 

This section formally introduces the regime switching model and extends the finite sample bounds given in Lemma \ref{lem:dgp} to this case. While we are primarily interested in the case, where transitions between alphabets follow an ergodic Markov chain on a finite state space, our presentation is given in more generality.  Let $A$ be a finite or countably infinite set. For each $a\in A$, let $P_a=(p_{a,k})_{k\ge 1}$ be a probability distribution on $\N_+$. Any discrete time stochastic process $\{Y_n\}_{n\ge1}$ on $A$ can be described by a family of conditional distributions $R=\{R_n\}_{n\ge1}$, where $R_1(a) = \prob(Y_1=a)$ and for $n\ge2$
$$
R_n(a_n|a_1,a_2,\dots, a_{n-1}) = \prob(Y_n=a_n|Y_1 =a_1,Y_2=a_2,\dots,Y_{n-1}= a_{n-1}).
$$

We now introduce a process on the state space $\mathcal A:=A\times \N_+$ defined by the family of conditional distributions given by $\mathcal R=\{\mathcal R_n\}_{n\ge1}$, where $\mathcal R_1$ satisfies $\mathcal R_1((a,k))=R_1(a)p_{a,k}$ and for $n\ge2$
\begin{eqnarray}\label{eq:new R}
\mathcal R_n((a_n,k_n)|(a_1,k_1),(a_2,k_2),\dots, (a_{n-1},k_{n-1})) = R_n(a_n|a_1,a_2,\dots, a_{n-1})p_{a_n,k_n}.
\end{eqnarray}
Now, let $Z=(Z_n)_{n\ge1}$ be an $\mathcal A$-valued stochastic process governed by $\{\mathcal R_n\}_{n\ge1}$ and let $X=(X_n)_{n\ge 1}$ and $K=(K_n)_{n\ge 1}$ denote the first and second coordinate processes of $Z$, \emph{i.e.} 
\[
Z_n=(X_n,K_n),\quad n\ge 1.
\]
We will refer to the process $X$ as the underlying process.  Note, in particular, that $X$ is $A$-valued, while $K$ takes values in $\N_+$. The next result gives a more explicit description of the dynamics of the processes $X$ and $K$. 

\begin{lem}
\label{lem:jk}
In the above context, the following statements hold:
\begin{itemize}
\item[$(1)$] The process $(X_{n})_{n\ge 1}$ is governed by $\{R_n\}_{n\ge1}$.
\item[$(2)$]  For all $n\ge 1$ and for all $k\ge 1$, 
$$\prob\{K_{n}=k\,\vert\,X_{1},\dots,X_{n}\}=p_{X_{n},k},$$
where $p_{X_{n},k}$ is the random variable equal to $p_{a,k}$ on the event $\{X_{n}=a\}$.
\item[$(3)$] Conditionally on the variables $X_1,\dots,X_n$, the variables $K_1,\dots,K_n$ are independent. In particular, for all $i=1,\dots,n$ and all $k\ge 1$, 
$$\prob\{K_{i}=K_{n+1}\,\vert\,X_{1},\dots,X_{n+1},K_{n+1}\}=p_{X_{i},K_{n+1}},$$
where $p_{X_{i},K_{n+1}}$ is the random variable equal to $p_{a,k}$ on the event $\{X_{i}=a,K_{n+1}=k\}$.
\end{itemize}
\end{lem}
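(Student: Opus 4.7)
The plan is to extract all three statements from a single product formula. Iterating the conditional specification \eqref{eq:new R}, I would first write the joint distribution of $Z_1,\dots,Z_n$ as
$$\prob\{Z_1=(a_1,k_1),\dots,Z_n=(a_n,k_n)\}=\prod_{i=1}^n R_i(a_i\,|\,a_1,\dots,a_{i-1})\,\prod_{i=1}^n p_{a_i,k_i},$$
so that the dependence on the second coordinates is completely separated from the first-coordinate part and reduces to a product of the $p_{a_i,k_i}$. Statements (1)--(3) should all be read off from this factorization, with no further probabilistic input.

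For (1), I would sum the above identity over all $(k_1,\dots,k_n)\in\N_+^n$. Because $\sum_{k\ge 1}p_{a,k}=1$ for every $a\in A$, the $K$-product collapses to $1$ and the remaining $\prod_{i=1}^n R_i(a_i\,|\,a_1,\dots,a_{i-1})$ is exactly the joint law prescribed by $\{R_n\}_{n\ge 1}$, so $X$ is governed by $\{R_n\}_{n\ge 1}$. For (2), the same factorization, summed out over $k_1,\dots,k_{n-1}$, gives
$$\prob\{K_n=k,\,X_1=a_1,\dots,X_n=a_n\}=p_{a_n,k}\prod_{i=1}^n R_i(a_i\,|\,a_1,\dots,a_{i-1});$$
dividing by the marginal from (1) leaves $p_{a_n,k}$, which equals $p_{X_n,k}$ on the event $\{X_n=a_n\}$. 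For (3), one line of division yields
$$\prob\{K_1=k_1,\dots,K_n=k_n\,|\,X_1=a_1,\dots,X_n=a_n\}=\prod_{i=1}^n p_{a_i,k_i},$$
which encodes both conditional independence of $K_1,\dots,K_n$ given $X_1,\dots,X_n$ and the marginals from (2). The \emph{in particular} clause then follows by applying the same argument to $Z_1,\dots,Z_{n+1}$: the conditional independence there shows that $K_i$ is independent of $K_{n+1}$ given $X_1,\dots,X_{n+1}$, so conditioning further on $K_{n+1}$ preserves the marginal $p_{X_i,\cdot}$ of $K_i$, and evaluating at $K_{n+1}$ delivers the stated identity.

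There is no genuine technical obstacle, only bookkeeping. The main care needed is (i) correctly unwinding the iterated conditional specification \eqref{eq:new R} into the advertised product form, (ii) justifying the interchange of finitely many sums over the (possibly countably infinite) set $\N_+$, which is immediate by non-negativity, and (iii) phrasing the identities directly at the level of joint probabilities so as to sidestep any issue of dividing by events of probability zero. Once these points are handled, all three items of the lemma drop out of the one-shot factorization above.
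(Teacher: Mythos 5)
Your proposal is correct and follows essentially the same route as the paper: the paper likewise writes the joint law of $Z_1,\dots,Z_n$ as $\prod_i R_i(a_i\,|\,a_1,\dots,a_{i-1})\prod_i p_{a_i,k_i}$ and reads off (2) and (3) by summing out the $k$'s and dividing, exactly as you do. The only cosmetic difference is that the paper proves (1) by computing $\prob(X_{n+1}=a'\,|\,X_{1\to n})$ via the tower property rather than by summing the factorization over all $k_1,\dots,k_n$, but the content is the same.
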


\vspace{.3cm}

\begin{rem}
We are motivated by the case, where $R=\{R_n\}_{n\ge1}$ represents the conditional distributions of a Markov chain with transition operator $Q$ and initial distribution $\eta$. In this case, we have: 
$R_1=\eta$ and, for $n\ge2$, 
$$
R_n(a_n|a_1,a_2,\dots, a_{n-1})   = R_2(a_n|a_{n-1}) =Q(a_{n-1},a_n).
$$
It follows that, in this case, $\mathcal R_1((a_1,k_1)=\eta(a_1)p_{a_1,k_1}$ and, for $n\ge2$, 
$$
\mathcal R_n((a_n,k_n)|(a_1,k_1),(a_2,k_2),\dots, (a_{n-1},k_{n-1})) = Q(a_{n-1},a_{n})p_{a,k},
$$
which is the Markov operator denoted by $\mathcal Q$ in \eqref{Q intro}. In this case, to emphasize the dependence on the initial distibution we will write $\prob_\eta$ for $\prob$ and $\esp_\eta$ for $\esp$. It should be noted that the subscript refers to the initial distribution of the underlying process $X$ and not of $Z$.
\end{rem}

\vspace{.5cm}

Our next results establishes a link between the quantities:
\[
\mathcal{L}_n=\sum_{i=1}^{n}\mathbf 1\{Z_i=Z_{n+1}\}\et L_n=\sum_{i=1}^{n}\mathbf 1\{X_i=X_{n+1}\}.
\]

\begin{lem}
\label{lemma:repres}
For all $n\ge 1$ and all $0\le r\le n$
\[
\prob\{\mathcal L_n=r\}= \esp \left[\binom{L_n}{r}\sum_{k=1}^{+\infty}p^{1+r}_{X_{n+1},k}(1-p_{X_{n+1},k})^{L_n-r}\right],
\]
where we take $\binom{L_n}{r}=0$ when $L_n<r$.
\end{lem}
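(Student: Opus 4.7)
The plan is to establish the distribution of $\mathcal L_n$ by a two-step conditioning argument, first on $(X_1,\ldots,X_{n+1},K_{n+1})$ and then on $(X_1,\ldots,X_{n+1})$, exploiting the two structural properties recorded in Lemma \ref{lem:jk}. The crucial observation is that the event $\{Z_i=Z_{n+1}\}$ factorizes as $\{X_i=X_{n+1}\}\cap\{K_i=K_{n+1}\}$, so that
\[
\mathcal L_n=\sum_{i=1}^{n}\ind\{X_i=X_{n+1}\}\,\ind\{K_i=K_{n+1}\},
\]
and among the $n$ indices $i$ only the $L_n$ ones satisfying $X_i=X_{n+1}$ can contribute.

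First I would condition on $(X_1,\ldots,X_{n+1},K_{n+1})$. By Lemma \ref{lem:jk}(3), given the underlying $X$-trajectory the $K_i$'s are independent with $K_i\sim P_{X_i}$; conditioning additionally on $K_{n+1}$ does not alter this because $K_{n+1}$ itself is, given the $X$'s, independent of $K_1,\ldots,K_n$. On the event $\{X_i=X_{n+1}\}$ we have $P_{X_i}=P_{X_{n+1}}$, and Lemma \ref{lem:jk}(3) yields $\prob\{K_i=K_{n+1}\mid X_1,\ldots,X_{n+1},K_{n+1}\}=p_{X_{n+1},K_{n+1}}$. Hence, conditionally on $(X_1,\ldots,X_{n+1},K_{n+1})$, the variable $\mathcal L_n$ is a sum of $L_n$ independent Bernoulli$(p_{X_{n+1},K_{n+1}})$ variables, so
\[
\prob\{\mathcal L_n=r\mid X_1,\ldots,X_{n+1},K_{n+1}\}=\binom{L_n}{r}p_{X_{n+1},K_{n+1}}^{\,r}(1-p_{X_{n+1},K_{n+1}})^{L_n-r}.
\]

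Next I would integrate out $K_{n+1}$ using Lemma \ref{lem:jk}(2), which identifies its conditional law given $(X_1,\ldots,X_{n+1})$ as $P_{X_{n+1}}$. Summing the above display against this conditional law yields
\[
\prob\{\mathcal L_n=r\mid X_1,\ldots,X_{n+1}\}=\binom{L_n}{r}\sum_{k\ge 1}p_{X_{n+1},k}^{1+r}(1-p_{X_{n+1},k})^{L_n-r},
\]
and taking a final expectation over $(X_1,\ldots,X_{n+1})$ delivers the claimed identity. The only real obstacle is bookkeeping: one must ensure that conditioning on $K_{n+1}$ does not perturb the conditional independence of $K_1,\ldots,K_n$ given the $X$'s, so that the Bernoulli indicators $\ind\{K_i=K_{n+1}\}$ restricted to indices with $X_i=X_{n+1}$ are genuinely iid with the common success probability $p_{X_{n+1},K_{n+1}}$; once that is in place, a binomial count and a single sum over $k$ finish the proof.
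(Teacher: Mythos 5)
Your proof is correct and follows essentially the same route as the paper: condition on $(X_1,\dots,X_{n+1},K_{n+1})$ to identify the conditional law of $\mathcal L_n$ as Binomial with parameters $L_n$ and $p_{X_{n+1},K_{n+1}}$ via Lemma \ref{lem:jk}(3), then integrate out $K_{n+1}$ using Lemma \ref{lem:jk}(2). The only cosmetic difference is that the paper first inserts the indicator $\mathbf 1\{L_n\ge r\}$ using $\{\mathcal L_n=r\}\subset\{L_n\ge r\}$, whereas you rely directly on the convention $\binom{L_n}{r}=0$ when $L_n<r$.
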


A slight modification of Lemma \ref{lemma:repres} brings us to the main result of this section, which extends Lemma \ref{lem:dgp} from the iid case, to the regime switching case. First, we introduce some notation. For all $a\in A$, we write $\nu(a,\cdot)$ to denote the counting function of $P_{a}=(p_{a,k})_{k\in\mathbb N_+}$, which is defined, for all $0\le\e\le 1$, by 
\begin{eqnarray}\label{eq:counting meas for family}
{\nu}(a,\e)=\sum_{k\ge 1}\mathbf 1\{p_{a,k}\ge\e\}.
\label{nu2}
\end{eqnarray} 
 
\begin{theo}\label{tg}
For any $n\ge 1$ and any $0\le r\le n-1$, we have
\begin{equation}\label{eq:main}
\prob\{\mathcal L_n= r\}\le \prob\{L_n= r\}\sup_{a\in A}\sum_{k=1}^{+\infty}p^{1+r}_{a,k}+\inf_{0\le\e\le1}\{a^{n,r}(\e)+b^{n,r}(\e)\},
\end{equation}
where
\begin{eqnarray}
a^{n,r}(\e)&=&c(r)\,\esp\left[\mathbf 1\{L_n>r\}\frac{\nu(X_{n+1},\e)}{L_n}\right],
\nonumber\\
\nonumber\\
b^{n,r}(\e)&=&2^{1+r}\int_{0}^{\e}u^r\,\esp\left[\mathbf 1\{L_n>r\}\nu\left(X_{n+1},\frac u2\right)\binom{L_n}{r}\left(1-\frac u2\right)^{L_n-r}\right]{\rm d}u,
\nonumber
\end{eqnarray}
and where $c(r)$ is as in \eqref{cr}.
\end{theo}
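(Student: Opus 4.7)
The plan is to start from the representation given by Lemma \ref{lemma:repres} and split the expectation according to the value of $L_n$ relative to $r$, exploiting the fact that Lemma \ref{lem:dgp} may be read as a purely \emph{deterministic} inequality about an arbitrary probability measure on $\mathbb N_+$ and an arbitrary positive integer (rather than a probabilistic statement about iid samples).

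First I would note that, since $\binom{L_n}{r}=0$ on $\{L_n<r\}$, Lemma \ref{lemma:repres} gives
\begin{align*}
\prob\{\mathcal L_n=r\}
&=\esp\!\left[\mathbf 1\{L_n=r\}\sum_{k\ge 1}p_{X_{n+1},k}^{1+r}\right]\\
&\quad+\esp\!\left[\mathbf 1\{L_n>r\}\binom{L_n}{r}\sum_{k\ge 1}p_{X_{n+1},k}^{1+r}(1-p_{X_{n+1},k})^{L_n-r}\right],
\end{align*}
where on $\{L_n=r\}$ I used that $\binom{r}{r}=1$ and $(1-p)^{0}=1$. The first expectation is pointwise bounded by $\sup_{a\in A}\sum_{k}p_{a,k}^{1+r}$ times $\mathbf 1\{L_n=r\}$, which yields $\prob\{L_n=r\}\sup_{a\in A}\sum_{k}p_{a,k}^{1+r}$, i.e.\ the first summand of \eqref{eq:main}.

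The second expectation will give the infimum term. Fix $\e\in[0,1]$ and condition on $(X_1,\dots,X_{n+1})$. On $\{L_n>r\}$ the integer $L_n$ satisfies $L_n\ge r+1\ge 1$, so Lemma \ref{lem:dgp} applies with the probability measure on $\mathbb N_+$ taken to be $P_{X_{n+1}}$ (whose counting function is $\nu(X_{n+1},\cdot)$) and with the free parameter ``$n$'' taken to be $L_n$. This gives the pointwise inequality
\begin{align*}
\mathbf 1\{L_n>r\}\binom{L_n}{r}\sum_{k\ge 1}p_{X_{n+1},k}^{1+r}(1-p_{X_{n+1},k})^{L_n-r}
&\le c(r)\,\mathbf 1\{L_n>r\}\frac{\nu(X_{n+1},\e)}{L_n}\\
&\quad+2^{1+r}\mathbf 1\{L_n>r\}\binom{L_n}{r}\int_{0}^{\e}\!\nu\!\left(X_{n+1},\tfrac{u}{2}\right)u^{r}\!\left(1-\tfrac{u}{2}\right)^{L_n-r}\!{\rm d}u.
\end{align*}
Taking expectations and using Fubini on the integral term identifies the two resulting contributions as $a^{n,r}(\e)$ and $b^{n,r}(\e)$ respectively. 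Since the left-hand side of the whole decomposition is independent of $\e$, I finally take the infimum over $\e\in[0,1]$.

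The only slightly delicate point is the conditional use of Lemma \ref{lem:dgp}. Although the lemma is phrased in terms of the probability $\prob\{L_n=r\}$, its statement is really the analytic bound $\binom{n}{r}\sum_{a}p_{a}^{1+r}(1-p_{a})^{n-r}\le \tfrac{c(r)\nu(\e)}{n}+\cdots$, valid for any probability measure on $\mathbb N_+$ and any integer $n\ge 1$ with $0\le r\le n-1$. Once this reading is made explicit, the proof reduces to the decomposition, the pointwise estimate, and Fubini sketched above; no further structural obstacle arises from the regime switching dynamics, since the randomness in $X_{n+1}$ and $L_n$ is absorbed into the outer expectation.
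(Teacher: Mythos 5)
Your proposal is correct and follows essentially the same route as the paper: split the representation from Lemma \ref{lemma:repres} into the events $\{L_n=r\}$ and $\{L_n>r\}$, bound the first piece by $\prob\{L_n=r\}\sup_a\sum_k p_{a,k}^{1+r}$, and apply the deterministic reading of Lemma \ref{lem:dgp} pointwise inside the expectation with $L_n$ in place of $n$ and $P_{X_{n+1}}$ as the measure, then pull the infimum over $\e$ outside. The only cosmetic difference is that the paper first keeps $\inf_\e$ inside the expectation and then uses $\esp[\inf_\e(\cdot)]\le\inf_\e\esp[\cdot]$, whereas you fix $\e$ before taking expectations and optimize at the end; both are valid and yield the identical bound.
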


Since, the formulation of Theorem \ref{tg} is quite general, an explicit evaluation of the coefficients $a^{n,r}(\e)$ and $b^{n,r}(\e)$ can require cumbersome computations.  More tractable formulas can be provided in a number of situations. We give several examples. 

\begin{exm}\label{tgrem1}
Consider the situation where all distributions $P_a=\{p_{a,k}\}_{k\in\N_+}$ are equal to the same distribution $P=\{p_k\}_{k\in\N_+}$ and therefore all counting functions $\nu(a,.)$ equal to the counting function $\nu$ of $P$. In this scenario, an elementary reordering of the terms in \eqref{eq:main} yields that, for any $\e\in[0,1]$, 
\begin{equation}
\label{eq:main:simple1}
\prob\{\mathcal L_n= r\}\le \sum_{m=r}^{n} C_{r,m}(\e)\,\prob\{L_n= m\},
\end{equation}
where 
\begin{equation}
\label{crr1}
C_{r,r}(\e)=C_{r,r}=\sum_{k=1}^{+\infty}p^{1+r}_{k},
\end{equation}
and, for $1+r\le m\le n$,
\begin{equation}
\label{crm1}
C_{r,m}(\e)=\frac{c(r)\nu(\e)}{m}+2^{1+r}\binom{m}{r}\int_{0}^{\e}u^r\,\nu\left(\frac u2\right)\left(1-\frac u2\right)^{m-r}{\rm d}u,
\end{equation}
where $c(r)$ is as in \eqref{cr}.
\end{exm}

\begin{exm}\label{tgrem2}
Another favorable scenario corresponds to the case where all probabilities $P_a=\{p_{a,k}\}_{k\in\N_+}$ have support contained in $\{1,\dots,M\}$ for some $M<+\infty$ independent of $a\in A$, i.e. 
\[
p_{a,k}=0 \mbox{  for  } a\in A \mbox{ and } k\ge M+1.
\] 
In this case, taking $\e=0$ on the right-hand side of \eqref{eq:main}, and noticing that $\nu(a,0)$ corresponds to the size of the support of $P_a$, yields 
\begin{equation}
\label{eq:main:simple2}
\prob\{\mathcal L_n= r\}\le \sum_{m=r}^{n} C'_{r,m}\,\prob\{L_n= m\},
\end{equation}
where 
\begin{equation}
\label{crm2}
C'_{r,r}=\sup_{a\in A}\sum_{k=1}^{M}p^{1+r}_{a,k}\et  C'_{r,m}=\frac{c(r)M}{m} \mbox{  for  }1+r\le m\le n
\end{equation}
and where $c(r)$ is as in \eqref{cr}.\\
\end{exm}

We now turn to the important situation where the distribution is regularly varying. In the iid case, the corresponding result is given in Corollary 2.2 of \citet{DGP16}. 

\begin{pro}\label{prop: RV}
Assume that for some $\alpha\in[0,1]$ and some non-increasing function $\ell\in\fS$, we have 
\[
\nu(a,\epsilon)\le \epsilon^{-\alpha}\ell(1/\epsilon)
\]
for all  $a\in A$ and all  $\epsilon\in(0,1]$. In this case,
\begin{eqnarray*}
\prob\{\mathcal L_n= r\} \le c_1(\alpha,r) \esp\left[\mathbf 1\{L_n>r\} L_n^{-(1-\alpha)}\ell(L_n)\right] + c_2(\alpha,r)\prob\{L_n=r\},
\end{eqnarray*}
where 
\begin{align*}
c_1(\alpha,r) &= c(r) + \frac{4^{1+r}}{r!}(1+r)^{1+r-\alpha}\gamma\left(1+r-\alpha,\frac{1}{2}\right),\\
c_2(\alpha,r) &= \left\{\begin{array}{ll}1&r=0,\\
\min\left\{1,p_\vee^{r+1} r^{\alpha}\ell(r) + r^{-r}\right\} &r\ge1,
\end{array}\right.
\end{align*}
$p_\vee=\sup\{p_{a,k}\}_{(a,k)\in\mathcal A}$, and $\gamma(t,x)=\int_0^x u^{t-1}e^{-u}{\rm d}u$ is the incomplete gamma function.
\end{pro}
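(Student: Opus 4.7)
\emph{The plan.} I would avoid using Theorem \ref{tg} in its packaged form, because its infimum is over \emph{deterministic} $\e$, whereas the target rate $L_n^{-(1-\alpha)}\ell(L_n)$ is random and corresponds to the pathwise choice $\e = 1/L_n$. Instead, I would start directly from the pathwise identity in Lemma \ref{lemma:repres} and split the expectation there according to whether $L_n<r$ (zero contribution since $\binom{L_n}{r}=0$), $L_n=r$ (contributes $\esp[\ind\{L_n=r\}\sum_k p_{X_{n+1},k}^{1+r}]$), or $L_n>r$ (the main term). The proof then reduces to two tasks: (i) bounding $\sup_{a\in A}\sum_{k\ge 1} p_{a,k}^{1+r}$ by $c_2(\alpha,r)$, and (ii) bounding the integrand on $\{L_n>r\}$ by $c_1(\alpha,r)L_n^{-(1-\alpha)}\ell(L_n)$ pathwise.

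\emph{Obtaining $c_2$.} The case $r=0$ is immediate from $\sum_k p_{a,k}=1$. For $r\ge 1$, one bound is the trivial $\sum_k p_{a,k}^{1+r}\le \sum_k p_{a,k}=1$. For the other bound, I would split the sum at the threshold $t=1/r$: the part with $p_{a,k}\ge 1/r$ is at most $p_\vee^{1+r}\,\nu(a,1/r)\le p_\vee^{1+r}r^{\alpha}\ell(r)$ by the regular variation hypothesis, and the part with $p_{a,k}<1/r$ is at most $(1/r)^{r}\sum_k p_{a,k}\le r^{-r}$. Taking the minimum of the two bounds yields $c_2(\alpha,r)$.

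\emph{Obtaining $c_1$.} On $\{L_n>r\}$ I would apply Lemma \ref{lem:dgp} pathwise, conditionally on $(L_n,X_{n+1})$, with $n$ replaced by $L_n$ and the random threshold $\e=1/L_n$. The first piece $c(r)\nu(X_{n+1},1/L_n)/L_n$ is at most $c(r)L_n^{-(1-\alpha)}\ell(L_n)$ by the regular variation bound $\nu(X_{n+1},1/L_n)\le L_n^{\alpha}\ell(L_n)$. For the second, integral, piece I would substitute $u=2s/L_n$ to rescale the integration to $s\in[0,1/2]$ and then use: the regular variation bound $\nu(X_{n+1},s/L_n)\le L_n^{\alpha}s^{-\alpha}\ell(L_n/s)$; the inequality $\binom{L_n}{r}\le L_n^{r}/r!$; the monotonicity bound $\ell(L_n/s)\le \ell(L_n)$, valid because $\ell$ is non-increasing and $L_n/s\ge L_n$; and $(1-s/L_n)^{L_n-r}\le e^{-s/(r+1)}$, which follows from $1-x\le e^{-x}$ combined with $(L_n-r)/L_n\ge 1/(r+1)$ on $\{L_n>r\}$. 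A final substitution $s=(r+1)t$ converts the remaining integral $\int_0^{1/2}s^{r-\alpha}e^{-s/(r+1)}\,\rd s$ into $(1+r)^{1+r-\alpha}\gamma(1+r-\alpha,1/2)$, after enlarging the upper limit from $1/(2(r+1))$ to $1/2$. Summing the two contributions gives $c_1(\alpha,r)L_n^{-(1-\alpha)}\ell(L_n)$ on $\{L_n>r\}$, and taking expectations closes the argument.

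\emph{Main obstacle.} The delicate step is the bound $(1-s/L_n)^{L_n-r}\le e^{-s/(r+1)}$: the factor $(r+1)^{-1}$ in the exponent is unavoidable for finite $L_n$ on $\{L_n>r\}$, and it is precisely this loss that forces the substitution $s=(r+1)t$ and generates the $(1+r)^{1+r-\alpha}$ prefactor that appears in $c_1(\alpha,r)$. Everything else is routine bookkeeping around the regular variation estimate and the change of variables $u=2s/L_n$.
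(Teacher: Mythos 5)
Your proof is correct and follows essentially the same route as the paper: the paper likewise bypasses Theorem \ref{tg}, starts from Lemma \ref{lemma:repres}, splits into the $\{L_n>r\}$ and $\{L_n=r\}$ pieces, and bounds them by $c_1(\alpha,r)\,\esp[\mathbf 1\{L_n>r\}L_n^{-(1-\alpha)}\ell(L_n)]$ and $c_2(\alpha,r)\,\prob\{L_n=r\}$ respectively. The only difference is that the paper obtains these two bounds by citing Corollary 2.2 and Equation (2.4) of \citet{DGP16} (applied with $n$ replaced by $L_n$, resp.\ with $\epsilon=1/r$), whereas you rederive them from Lemma \ref{lem:dgp} with the pathwise choice $\e=1/L_n$ and a direct split of $\sum_k p_{a,k}^{1+r}$ at the threshold $1/r$; your computation reproduces the stated constants exactly.
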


\begin{rem}
Note that, in the case, $\alpha=1$ and $r=0$, the bound in Proposition \ref{prop: RV} is trivial since it involves $\gamma\left(0,\frac{1}{2}\right)=+\infty$. Even in the iid case, the bounds given in \cite{DGP16} are not able to deal with this case.
\end{rem}

\begin{rem}
Note that Theorem \ref{tg} and Proposition \ref{prop: RV} are quite general and hold no matter what the underlying process is. However, this generality has a cost. In particular, we still need to know quite a bit about the underlying process. In the case where the underlying process is a finite state space ergodic Markov chain, we can use Proposition \ref{prop: main for finite} and related results to get more explicit formulas. 
\end{rem}

\begin{cor}\label{cor: RV finite sample}
Assume that $|A|<\infty$ and that the underlying process is an aperiodic and irreducible Markov chain with transition operator $Q$, stationary distribution $\pi=(\pi_a)_{a\in A}$, and initial distribution $\eta$. Let $\pi_\wedge=\min_{a\in A}\pi_a$, let $t_0$ be as in \eqref{eq:t0}, and let $\lambda$ be as in \eqref{eq: lambda}. Assume further that, for some $\alpha\in[0,1]$ and some non-increasing function $\ell\in\fS$, we have 
\[
\nu(a,\epsilon)\le \epsilon^{-\alpha}\ell(1/\epsilon), \ \ a\in A, \  \epsilon\in(0,1].
\]
For any $\epsilon\in(0,\pi_\wedge)$, if $n>\frac{2t_0+r\lambda  + \lambda(1-\pi_\wedge)}{\lambda\pi_\wedge} \vee \frac{2t_0  + \lambda(1-\pi_\wedge)}{\lambda(\pi_\wedge-\epsilon)}$, then
\begin{eqnarray*}
\prob_\eta\{\mathcal L_n = r\}  \le H(n,\epsilon),
\end{eqnarray*}
where 
\begin{eqnarray*}
H(n,\epsilon) &=& c_1(\alpha,r)(n\epsilon)^{-(1-\alpha)}\ell(n\epsilon) + c_2(\alpha,r) C \exp\left(-\frac{n}{2}\left(\frac{\lambda\pi_\wedge}{t_0} - \frac{2+(r+1)\lambda/t_0}{n} \right)^2\right) \\
&&  \qquad+ c_3(\alpha,r)C \exp\left(-\frac{n}{2}\left(\frac{\lambda(\pi_\wedge-\epsilon)}{t_0} - \frac{2+\lambda/t_0}{n} \right)^2\right).
\end{eqnarray*}
Here $C$ is as in Proposition \ref{prop: main for finite},  $c_1(\alpha,r)$ and $c_2(\alpha,r)$ are as in Proposition \ref{prop: RV}, and
$$
c_3(\alpha,r) = c_1(\alpha,r)(r+1)^{-(1-\alpha)}\ell(r+1).
$$
\end{cor}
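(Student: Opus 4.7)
The plan is to combine the general regularly varying bound from Proposition \ref{prop: RV} with the Markov chain concentration estimate from Proposition \ref{prop: main for finite}, the latter applied at two different thresholds.

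The starting point is the inequality
$$\prob_\eta\{\mathcal L_n = r\} \le c_1(\alpha,r)\,\esp_\eta\left[\mathbf 1\{L_n > r\}\,L_n^{-(1-\alpha)}\ell(L_n)\right] + c_2(\alpha,r)\,\prob_\eta\{L_n = r\},$$
which is Proposition \ref{prop: RV}. The first part of the hypothesis on $n$, namely $n > \frac{2t_0 + r\lambda + \lambda(1-\pi_\wedge)}{\lambda \pi_\wedge}$, is precisely what Proposition \ref{prop: main for finite} requires in order to bound $\prob_\eta\{L_n = r\} \le \prob_\eta\{L_n \le r\}$ by the exponential term appearing as the second summand of $H(n,\epsilon)$.

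For the expectation I would split on the events $\{L_n \ge n\epsilon\}$ and $\{r < L_n < n\epsilon\}$. Since $\ell$ is non-increasing and $1 - \alpha \ge 0$, the map $u \mapsto u^{-(1-\alpha)}\ell(u)$ is non-increasing. On $\{L_n \ge n\epsilon\}$ this yields $L_n^{-(1-\alpha)}\ell(L_n) \le (n\epsilon)^{-(1-\alpha)}\ell(n\epsilon)$, and bounding the indicator's probability by $1$ produces the first summand of $H(n,\epsilon)$. On $\{r < L_n < n\epsilon\}$, I would instead use $L_n \ge r+1$ to obtain $L_n^{-(1-\alpha)}\ell(L_n) \le (r+1)^{-(1-\alpha)}\ell(r+1)$, and then multiply by $c_1(\alpha,r)$, recovering the prefactor $c_3(\alpha,r)$.

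It remains to bound $\prob_\eta\{L_n < n\epsilon\}$. Since $L_n$ is integer-valued, this is at most $\prob_\eta\{L_n \le \lfloor n\epsilon\rfloor\}$, so I would apply Proposition \ref{prop: main for finite} a second time with $r$ replaced by $\lfloor n\epsilon\rfloor$. The corresponding condition on $n$ is $n > \frac{2t_0 + \lfloor n\epsilon\rfloor\lambda + \lambda(1-\pi_\wedge)}{\lambda\pi_\wedge}$; using $\lfloor n\epsilon\rfloor \le n\epsilon$ this rearranges to $n > \frac{2t_0 + \lambda(1-\pi_\wedge)}{\lambda(\pi_\wedge - \epsilon)}$, the second half of the hypothesis. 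A short calculation then bounds the slack $\frac{2 + (\lfloor n\epsilon\rfloor + 1)\lambda/t_0}{n}$ above by $\frac{\epsilon\lambda}{t_0} + \frac{2 + \lambda/t_0}{n}$, and subtracting from $\lambda\pi_\wedge/t_0$ delivers exactly the exponent appearing in the third summand of $H(n,\epsilon)$.

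The main obstacle is purely algebraic bookkeeping: verifying that the two applications of Proposition \ref{prop: main for finite} produce precisely the constants and conditions on $n$ claimed, and that the substitution $r \mapsto \lfloor n\epsilon \rfloor$ delivers the exponent stated in the corollary. No new probabilistic inputs are needed beyond Propositions \ref{prop: RV} and \ref{prop: main for finite}, together with the monotonicity of $u \mapsto u^{-(1-\alpha)}\ell(u)$.
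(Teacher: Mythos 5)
Your proposal is correct and follows essentially the same route as the paper: apply Proposition \ref{prop: RV}, split the expectation over $\{r+1\le L_n<n\epsilon\}$ and $\{L_n\ge (n\epsilon)\vee(r+1)\}$ using the monotonicity of $u\mapsto u^{-(1-\alpha)}\ell(u)$, and then invoke Proposition \ref{prop: main for finite} twice, once at level $r$ and once at level roughly $n\epsilon$. The paper leaves the final substitution $r\mapsto\lfloor n\epsilon\rfloor$ and the resulting algebra implicit, whereas you spell it out, but the argument is the same.
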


\vspace{.2cm}

It may be interesting to note that, for any $\epsilon\in(0,\pi_\wedge)$ we have
\begin{eqnarray*}
H(n,\epsilon) \sim \epsilon^{-(1-\alpha)}c_1(\alpha,r)n^{-(1-\alpha)}\ell(n) \mbox{ as }n\to\infty.
\end{eqnarray*}

\vspace{.3cm}

\section{Asymptotics For the Regime Switching Model} \label{sec:asymp}

In this section we extend  \eqref{asympM} from the iid case to the case of the regime switching model, where the underlying process is an ergodic Markov chain on a finite state space. We first define regular variation of  $P=\{p_{a,k}\}$. For a review of basic facts about regularly varying distributions on $\mathbb N_+$ we refer the reader to Appendix \ref{RV appendix}.

\begin{defi}\label{def:rvp}
We say that $P=(p_{a,k})_{(a,k)\in A\times\mathbb N_+}$ is regularly varying with index $\alpha\in[0,1]$ if there exists an $\ell\in \fS$ and a function $C:A\mapsto[0,\infty)$, which is not identically zero, such that for each $a\in A$
$$
\lim_{\e\to 0}\frac{\nu(a,\e)}{\e^{-\alpha}\ell(1/\e)}=C(a),
$$
where $\nu$ is defined as in \eqref{eq:counting meas for family}. In this case we write $P\in RV_\alpha(C,\ell)$.
\end{defi}

When $\alpha=0$, we additionally assume that there exists an $\ell_0\in \fS$ and a function $D:A\mapsto[0,\infty)$, which is not identically zero, such that for each $a\in A$
\begin{eqnarray}\label{eq: for alpha 0 in}
\lim_{\e\to0}\frac{\sum_{k\ge1}p_k \mathbf1\{p_k\le\varepsilon\}}{\varepsilon \ell_0(1/\varepsilon) }= D(a).
\end{eqnarray}

For simplicity of notation, set for $x>0$
$$
h_{\alpha,r}(x) = \left\{\begin{array}{ll}
\ell_0(x) & \alpha=0,\\
\int_x^\infty u^{-1}\ell(u)\rd u & \alpha=1, \ r=0,\\
x^{-(1-\alpha)}\ell(x) & \mbox{otherwise.}
\end{array}\right.
$$
Propositions \ref{prop:main RV limit} and \ref{prop:main RV limit to zero} imply that if $P\in RV_\alpha(C,\ell)$, then
\begin{eqnarray}\label{eq: RV}
\lim_{n\to\infty}\frac{{n\choose r}\sum_{k=1}^\infty p_{a,k}^{r+1} (1-p_{a,k})^{n-r}}{h_{\alpha,r}(n) } = F(a,r),
\end{eqnarray}
where 
\begin{eqnarray}\label{eq: F}
F(a,r) = \left\{ \begin{array}{ll} 
D(a) & \alpha=0\\
C(a) & \alpha=1, \ r=0,\\
C(a) \frac{\alpha \Gamma(r+1-\alpha)}{r!} & \mbox{otherwise.}
\end{array}\right.
\end{eqnarray}
Note that, since $|A|<\infty$, the convergence in \eqref{eq: RV} is uniform in $a$. We now give the main result for this section.

\begin{theo}\label{theprop}
In the context of the regime switching model, assume that $|A|<\infty$ and that the underlying process is an aperiodic and irreducible Markov chain with stationary distribution $\pi=(\pi_a)_{a\in A}$ and initial distribution $\eta$. Assume further that $P\in RV_\alpha(C,\ell)$ with $\alpha\in[0,1]$ (when $\alpha=0$ additionally assume that \eqref{eq: for alpha 0 in} holds) and that $\ell$ (or $\ell_0$ when $\alpha=0$) is locally bounded away from $0$ and $\infty$ on $[1,\infty)$. In this case for all $r\ge 0$ we have
$$
\lim_{n\to\infty} \frac{\prob_{\eta}\{\mathcal L_n = r\} }{h_{\alpha,r}(n) } = \sum_{a\in A} \pi_a^\alpha F(a,r),
$$
and
\begin{eqnarray*}
\lim_{n\to\infty} \frac{\prob_{\eta}\{\mathcal L_n = r\} }{\esp_\eta [\mathbf 1\{L_n>r\}h_{\alpha,r}(L_n)] } = \frac{\sum_{a\in A} \pi_a^\alpha F(a,r)}{\sum_{a\in A} \pi_a^\alpha},
\end{eqnarray*}
where $F$ is given by \eqref{eq: F}.
\end{theo}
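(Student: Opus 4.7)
The plan is to start from the exact representation in Lemma \ref{lemma:repres},
$$
\prob_\eta\{\mathcal L_n = r\} \;=\; \esp_\eta\!\left[\binom{L_n}{r}\sum_{k=1}^{\infty} p_{X_{n+1},k}^{1+r}(1-p_{X_{n+1},k})^{L_n-r}\right],
$$
and to analyze the integrand jointly in the two random objects $L_n$ and $X_{n+1}$. Writing $g(m,a) := \binom{m}{r}\sum_{k\ge 1} p_{a,k}^{1+r}(1-p_{a,k})^{m-r}$, the regular variation hypothesis combined with Propositions \ref{prop:main RV limit} and \ref{prop:main RV limit to zero} yields $g(m,a)/h_{\alpha,r}(m)\to F(a,r)$ uniformly over the finite set $a\in A$.

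First I would isolate the event $\{L_n=r\}$: on this set $g(L_n,X_{n+1}) = \sum_k p_{X_{n+1},k}^{1+r} \le 1$, and $\prob_\eta\{L_n\le r\}$ decays exponentially by Proposition \ref{prop: main for finite} (applied to the reversed chain via the identity below), so this contribution is $o(h_{\alpha,r}(n))$. On the complementary event I would factor
$$
\frac{g(L_n,X_{n+1})}{h_{\alpha,r}(n)} \;=\; \frac{g(L_n,X_{n+1})}{h_{\alpha,r}(L_n)}\cdot\frac{h_{\alpha,r}(L_n)}{h_{\alpha,r}(n)}.
$$
The first factor converges to $F(X_{n+1},r)$ by the uniform asymptotic, provided $L_n\to\infty$. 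For the second factor I would use that $h_{\alpha,r}$ is regularly varying with index $-(1-\alpha)$ in the generic case, and slowly varying in the two degenerate cases, together with the estimate $L_n/n\to\pi_{X_{n+1}}$ in probability; Karamata's uniform convergence theorem then produces the appropriate limit $\pi_{X_{n+1}}^{-(1-\alpha)}$ (respectively $1$ in the degenerate cases).

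The main obstacle is that $L_n=L_n(X_{n+1})$ is a local time taken at the \emph{random} state $X_{n+1}$, so Lemma \ref{lem: concentration} does not apply verbatim. To bypass this I would condition on $X_{n+1}=a$ and use the reversal identity of Lemma \ref{lem:taun} to rewrite each conditional expectation as one over the reversed chain $\hat X$ started from $a$, where the local time $\hat L_{n+1}(a)-1$ is a genuine local time at a fixed state. Lemma \ref{lem: concentration} applied to $\hat Q$ then delivers $\hat L_{n+1}(a)/n\to\pi_a$ in probability, while the Radon--Nikodym factor $\eta(\hat X_{n+1})/\pi(\hat X_{n+1})$ produced by the reversal is bounded by $\max_{a}\eta_a/\pi_a<\infty$, supplying the domination needed to interchange limit and expectation.

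Combining these pieces via dominated convergence, and using convergence of the marginal law of $X_{n+1}$ to $\pi$, I would obtain
$$
\lim_{n\to\infty}\frac{\prob_\eta\{\mathcal L_n=r\}}{h_{\alpha,r}(n)} \;=\; \sum_{a\in A}\pi_a^\alpha F(a,r).
$$
The second statement follows by the same argument carried out with $g\equiv h_{\alpha,r}$ in place of the sum (equivalently, with $F(\cdot,r)$ replaced by the constant $1$), which gives $\esp_\eta[\mathbf 1\{L_n>r\}h_{\alpha,r}(L_n)]/h_{\alpha,r}(n)\to\sum_{a\in A}\pi_a^\alpha$; dividing the two limits yields the claim.
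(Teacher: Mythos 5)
Your route is essentially the paper's: the same starting point (Lemma \ref{lemma:repres}), the same split according to $\{L_n=r\}$ versus $\{L_n>r\}$, the same factorization of $g(L_n,X_{n+1})/h_{\alpha,r}(n)$ through $h_{\alpha,r}(L_n)$, the uniform-in-$a$ regular-variation limit, and the weak convergence of $X_{n+1}$ to $\pi$ at the end. One stylistic difference: your detour through Lemma \ref{lem:taun} and Lemma \ref{lem: concentration} to control the local time at the random state $X_{n+1}$ is more elaborate than needed. Since $A$ is finite, the almost sure law of large numbers $L_n(a)/n\to\pi_a$ holds uniformly in $a$, so $|L_n/n-\pi_{X_{n+1}}|\le\max_{a\in A}|L_n(a)/n-\pi_a|\to 0$ almost surely; this is how Lemma \ref{lemma: technical} handles it, with no conditioning or reversal required at that stage.

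There is, however, one genuine gap in your final step. On $\{L_n>r\}$ the factor $h_{\alpha,r}(L_n)/h_{\alpha,r}(n)$ is \emph{not} bounded: on $\{L_n=r+1\}$, for instance, it is of order $n^{1-\alpha}/\ell(n)$, which diverges whenever $\alpha<1$. The boundedness of the Radon--Nikodym factor $\mu(\hat X_{n+1})/\pi(\hat X_{n+1})$ is therefore not the domination you need, and dominated convergence cannot be invoked on all of $\{L_n>r\}$ as you state. The paper closes this by splitting $\{L_n>r\}$ further into $A(n)=\{r+1\le L_n<n\epsilon\}$ and $B(n)=\{L_n\ge (n\epsilon)\vee(r+1)\}$ for a fixed $\epsilon\in(0,\pi_\wedge)$: on $B(n)$ the Potter bounds give the uniform bound $K_\delta\epsilon^{-(1-\alpha)-\delta}$, which does license dominated convergence, while on $A(n)$ the integrand is at most of order $n^{1-\alpha+\delta}$ and $\prob_\eta(A(n))$ decays exponentially by Proposition \ref{prop: main for finite}, so that contribution vanishes. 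You already invoke the exponential bound for $\{L_n\le r\}$, so all the ingredients are in your hands, but the truncation at $n\epsilon$ and the Potter-bound domination must be made explicit; ``convergence in probability of $L_n/n$ plus dominated convergence'' does not suffice as written. The same remark applies to your proof of the denominator limit $\esp_\eta[\mathbf 1\{L_n>r\}h_{\alpha,r}(L_n)]/h_{\alpha,r}(n)\to\sum_{a\in A}\pi_a^\alpha$, which the paper establishes by exactly this two-event argument in Lemma \ref{lemma: technical}.
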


\vspace{.3cm}

This implies that, up to a constant, we have the same asymptotics as for the upper bound in Corollary \ref{cor: RV finite sample}. It may be interesting to note that as part of the proof of the theorem, we show that for any $r\ge0$
$$
\lim_{n\to\infty} \frac{\esp_\eta [\mathbf 1\{L_n>r\}h_{\alpha,r}(L_n)] }{h_{\alpha,r}(n)} = \sum_{a\in A} \pi_a^\alpha.
$$

\section{Proofs}
\label{sec:proofs} 
\subsection{Proofs for Sections \ref{section:prelim} and \ref{section:Finite}}
\begin{proof}[Proof of Lemma \ref{lem:taun}]
Let us first prove that, for any distributions $\mu$ and $\eta$ on $A$ and any bounded function $g:A^{n+1}\to \R_+$,
\begin{equation}
\label{reverse}
\esp_{\mu}\left[\frac{\eta(X_{n+1})}{\pi(X_{n+1})}g(X_1,\dots,X_{n+1})\right]=\esp_{\eta}\left[\frac{\mu(\hat X_{n+1})}{\pi(\hat X_{n+1})} g(\hat X_{n+1},\dots,\hat X_1)\right],
\end{equation}
where, on the right-hand side, it is understood that $\eta$ is taken as the initial distribution of the reversed chain. From the definition of $\hat Q$, we obtain
\begin{align*}
&\esp_{\mu}\left[\frac{\eta(X_{n+1})}{\pi(X_{n+1})}g(X_1,\dots,X_{n+1})\right]\\
&=\sum_{x_1,\dots,x_{n+1}}\frac{\eta(x_{n+1})}{\pi(x_{n+1})}g(x_1,\dots,x_{n+1})\prob_{\mu}(X_1=x_1,\dots,X_{n+1}=x_{n+1})\\
&=\sum_{x_1,\dots,x_{n+1}}\frac{\eta(x_{n+1})}{\pi(x_{n+1})}g(x_1,\dots,x_{n+1})\mu(x_1)Q(x_1,x_2)\dots Q(x_n,x_{n+1})\\
&=\sum_{x_1,\dots,x_{n+1}}\frac{\eta(x_{n+1})}{\pi(x_{n+1})}g(x_1,\dots,x_{n+1})\mu(x_1)\frac{\pi(x_2)}{\pi(x_1)}\hat Q(x_2,x_1)\dots \frac{\pi(x_{n+1})}{\pi(x_n)}\hat Q(x_{n+1},x_n)\\
&=\sum_{x_1,\dots,x_{n+1}}\frac{\mu(x_{1})}{\pi(x_{1})}g(x_1,\dots,x_{n+1})\eta(x_{n+1})\hat Q(x_{n+1},x_n)\dots\hat Q(x_2,x_1)\\
&=\sum_{x_1,\dots,x_{n+1}}\frac{\mu(x_{1})}{\pi(x_{1})}g(x_1,\dots,x_{n+1})\prob_{\eta}(\hat X_{1}=x_{n+1},\dots,\hat X_{n+1}=x_{1})\\
&=\esp_{\eta}\left[\frac{\mu(\hat X_{n+1})}{\pi(\hat X_{n+1})} g(\hat X_{n+1},\dots,\hat X_1)\right],
\end{align*}
which proves \eqref{reverse}. Then, for any measurable positive $f$, 
\begin{align*}
\esp_{\mu}\left[\frac{\eta(X_{n+1})}{\pi(X_{n+1})}f(L_n)\right] &= \esp_{\mu}\left[\frac{\eta(X_{n+1})}{\pi(X_{n+1})}f\left(\sum_{i=1}^{n}\mathbf 1\{X_i=X_{n+1}\}\right)\right]
\nonumber\\
&= \esp_{\eta}\left[\frac{\mu(\hat X_{n+1})}{\pi(\hat X_{n+1})}f\left( \sum_{i=2}^{n+1}\mathbf 1\{\hat X_{i}=\hat X_{1}\}\right)\right]
\\
&= \esp_{\eta}\left[\frac{\mu(\hat X_{n+1})}{\pi(\hat X_{n+1})}f\left( \sum_{i=1}^{n+1}\mathbf 1\{\hat X_{i}=\hat X_1\}-1\right)\right]
\nonumber\\
&= \esp_{\eta}\left[\frac{\mu(\hat X_{n+1})}{\pi(\hat X_{n+1})}f( \hat L_{n+1}(\hat X_1)-1)\right],
\nonumber
\end{align*}
where the second line follows by applying identity \eqref{reverse} with
\[
g(x_1,\dots,x_{n+1})=f\left( \sum_{i=1}^{n}\mathbf 1\{x_{i}=x_{n+1}\}\right).
\]
This completes the proof.
\end{proof}

\begin{proof}[Proof of Proposition \ref{prop: main for finite}]
Fix $r\ge 0$ and observe that the assumption on $n$ implies that $\pi_\wedge> \frac{r}{n}$. As a result, since $\esp_{\pi}L_n(a)=n\pi_a$, we deduce from Lemma \ref{lem: concentration} that
\begin{align*}
\prob_\mu\{L_n(a)\le r\}  &= \prob_\mu\{L_n(a)-n\pi_a\le -n(\pi_a-r/n)\} \\
&\le  \prob_\mu\{L_n(a)-n\pi_a\le -n(\pi_\wedge-r/n)\} \le \exp\left(-\frac{n}{2}\left(\frac{\lambda\pi_\wedge}{t_0} - \frac{2+r\lambda/t_0}{n} \right)^2\right)
\end{align*}
when $n>2t_0/(\lambda(\pi_\wedge-r/n))$, which is equivalent to $n>(2t_0+r\lambda)/(\lambda\pi_\wedge)$. From here, we provide two bounds on $\prob_\mu\{L_n\le r\}$, which, when combined, give the desired result. First, note that
\begin{align}
\prob_\mu\{L_n\le r\} &= \sum_{a\in A} \prob_\mu\{X_{n+1} = a, L_n\le r\} \nonumber\\
&\le \sum_{a\in A} \prob_\mu\{L_n(a)\le r\} \nonumber\\
& \le |A|  \exp\left(-\frac{n}{2}\left(\frac{\lambda\pi_\wedge}{t_0} - \frac{2+r\lambda/t_0}{n} \right)^2\right).
\label{prop: main for finite:e1}
\end{align}
Next, using Lemma \ref{lem:taun} with $f(u)=\mathbf 1\{u\le r\}$, it follows that
\begin{align*}
\prob_\mu\{L_n\le r\} &= \esp_{\pi}\left[\frac{\mu(\hat X_{n+1})}{\pi(\hat X_{n+1})}\mathbf 1\{\hat L_{n+1}(\hat X_1)\le r+1\}\right]\\
&\le \max_{b\in A}\frac{\mu(b)}{\pi(b)}\prob_{\pi}\{\hat L_{n+1}(\hat X_1)\le r+1\}\\
&= \max_{b\in A}\frac{\mu(b)}{\pi(b)}\sum_{a\in A}\pi(a)\prob_{a}\{\hat L_{n+1}(a)\le r+1\},
\end{align*}
where $\prob_{a}$ is the probability measure that corresponds to the case where the initial distribution is a point-mass at $a$. Hence, using once again Lemma \ref{lem: concentration} and the fact that the stationary distribution of the reversed chain is the same as for the original chain, it follows that
\begin{equation}
\label{prop: main for finite:e2}
\prob_\mu\{L_n\le r\}\le \max_{b\in A}\frac{\mu_b}{\pi_b}  \exp\left(-\frac{n+1}{2}\left(\frac{\lambda\pi_\wedge}{t_0} - \frac{2+(r+1)\lambda/t_0}{n+1} \right)^2\right),
\end{equation}
provided $n+1>(2t_0+(r+1)\lambda)/(\lambda\pi_\wedge)$ or equivalently $n>(2t_0+r\lambda  + \lambda(1-\pi_\wedge))/(\lambda\pi_\wedge)$. The desired result follows by combining \eqref{prop: main for finite:e1} and \eqref{prop: main for finite:e2}.
\end{proof}

\subsection{Proofs for Section \ref{section:sa}}

For convenience, we sometimes denote $Y_{1\to\,m}=(Y_1,\dots, Y_{m})$ for a given process $(Y_n)_{n\ge 1}$.

\begin{proof}[Proof of Lemma \ref{lem:jk}] $(1)$ The statement follows easily from the structure of $\mathcal R$. Let $p_1$ and $p_2$ be the functions defined, for $(a,k)\in\mathcal A$, by $p_1(a,k)=a$ and $p_2(a,k)=k$. We have, 
$$
\prob(X_1 = a) = \sum_{k\ge1}\prob(Z_1=(a,k)) = \sum_{k\ge1}\mathcal R_1((a,k)) =  \sum_{k\ge1}R_1(a)p_{a,k} = R_1(a).
$$
Further, for any $n\ge 1$  and any bounded (and measurable) $f:A\mapsto \mathbb R$,
$$
\esp[f(X_{n+1})|X_{1\to n}] =\esp[ \esp[f\circ  p_1(Z_{n+1})|Z_{1\to n}]|X_{1\to n}].
$$
From here, the fact that
\begin{eqnarray*}
\esp[f\circ  p_1(Z_{n+1})|Z_{1\to n}] &=& \sum_{a\in A}\sum_{k\in\mathbb N_+}\mathcal R_{n+1}((a,k)|Z_{1},Z_2,\dots,Z_n)f\circ p_1(a,k)\\
&=&  \sum_{a\in A}\sum_{k\ge1} R_{n+1}(a|X_{1},X_2,\dots,X_n)p_{a,k} f(a) \\
&=& \sum_{a\in A} R_{n+1}(a|X_{1},X_2,\dots,X_n) f(a)
\end{eqnarray*}
implies
$$
\esp[f(X_{n+1})|X_{1\to n}] = \sum_{a\in A} R_{n+1}(a|X_{1},X_2,\dots,X_n) f(a).
$$
In particular, taking $f(a) = \mathbf 1{\{a=a'\}}$ gives
$$
\prob(X_{n+1}=a'|X_{1\to n}) =  R_{n+1}(a'|X_{1},X_2,\dots,X_n),
$$
which proves the claim.\vspace{0.2cm}\\
$(2)$ For all $n\ge 2$ all $a_1,\dots, a_n\in A$ and $k_{n}\ge 1$, 
\begin{equation}
\label{lem:jk:e1}
\prob\{K_{n}=k_{n}|X_1=a_1,\dots,X_n=a_n\} = \sum_{k_1,\dots,k_{n-1}}\frac{\prob\{Z_n=(a_n,k_n),\dots,Z_1=(a_1,k_1)\}}{\prob\{X_1=a_1,\dots,X_n=a_n\}} .
\end{equation}
Using point $(1)$ it follows that 
\[ 
\prob\{X_1=a_1,\dots,X_n=a_n\}=R_1(a_1)R_2(a_2|a_1)\cdots R(a_{n}|a_1,a_2,\dots,a_{n-1}),
\]
and that
\[
\prob\{Z_n=(a_n,k_n),\dots,Z_1=(a_1,k_1)\} =  R_1(a_1)R_2(a_2|a_1)\cdots R(a_{n}|a_1,a_2,\dots,a_{n-1}) p_{a_1,k_1}p_{a_2,k_2}\dots p_{a_n,k_n}.
\]
Combining these two identities with \eqref{lem:jk:e1}, we deduce that
\begin{align}
\prob\{K_{n}=k_{n}|X_1=a_1,\dots,X_n=a_n\}&= p_{a_n,k_n}\sum_{k_1,\dots,k_{n-1}} p_{a_1,k_1} p_{a_2,k_2}\dots p_{a_{n-1},k_{n-1}}
\nonumber\\
&=p_{a_n,k_n},
\nonumber
\end{align}
where the last identity follows from the fact that $\sum_k p_{a,k}=1$. The case where $n=1$ is similar.
 \vspace{0.2cm}\\
$(3)$ For any $a_1,\dots, a_n\in A$ and any $k_1,\dots k_n\in \mathbb N_+$,
\begin{align*}
\prob\{K_1=k_1,\dots,K_{n}=k_{n}|X_1=a_1,\dots,X_n=a_n\} &= \prod_{i=1}^{n}p_{a_i,k_i}\\
&=\prod_{i=1}^{n}\prob\{K_i=k_i|X_1=a_1,\dots,X_i=a_i\},
\end{align*}
were the first identity follows by arguments similar to those used in the proof of point $(2)$ and the second follows directly from point $(2)$. Finally, the proof that, for $i=1,2,\dots,n$
\[
\prob \left\{K_{i}=K_{n+1}\,\vert\,X_{1},\dots,X_{n+1},K_{n+1}\right\}=p_{X_{i},K_{n+1}}
\] 
is very similar and is omitted for brevity.
\end{proof}

\begin{proof}[Proof of Lemma \ref{lemma:repres}] 
Fix $n\ge 1$ and $0\le r\le n$. Since $\{\mathcal L_n=r\}\subset\{L_n\ge r\}$, we have
\begin{equation}
\label{th:tge1}
\prob\{\mathcal L_n=r\}= \prob \{L_n\ge r,\mathcal L_n=r\}.
\end{equation}
Noticing that the variable $L_n$ is $\sigma(X_{1\to\,n+1})$-measurable by construction, we obtain
\begin{align}
\prob\{\mathcal L_n=r\} &=\prob\left\{L_n\ge r,\,\sum_{i=1}^{n}\mathbf 1\{X_i=X_{n+1},K_i=K_{n+1}\}=r\right\},
\nonumber\\
&=\esp\left[\mathbf 1\{L_n\ge r\}\prob\left(\,\sum_{i=1}^{n}\mathbf 1\{X_i=X_{n+1},K_i=K_{n+1}\}=r\,\vert\,K_{n+1},X_{1\to\,n+1}\right)\right].
\nonumber
\end{align}
Conditionally on $K_{n+1}$ and $X_{1\to\,n+1}$ the variables $K_{1},\dots,K_{n}$ are, according to point $(3)$ of Lemma \ref{lem:jk}, independent and satisfy 
\begin{equation}
\label{th:tge2}
\prob\{K_{i}=K_{n+1}\,\vert\,X_{1\to\,n+1},K_{n+1}\}=p_{X_{i},K_{n+1}}.
\end{equation}
As a result, conditionally on $K_{n+1}$ and $X_{1\to\,n+1}$, the variable
$$\sum_{i=1}^{n}\mathbf 1\{X_i=X_{n+1},K_i=K_{n+1}\},$$
follows a Binomial distribution with parameters $L_n$ and $p_{X_{n+1},K_{n+1}}$. Hence, we obtain
\begin{align}
\prob\{\mathcal L_n=r\}&=\esp\left[\mathbf 1\{L_n\ge r\}\binom{L_n}{r}p^{r}_{X_{n+1},K_{n+1}}(1-p_{X_{n+1},K_{n+1}})^{L_n-r}\right]
\nonumber\\
&=\esp\left[\mathbf 1\{L_n\ge r\}\binom{L_n}{r}\esp\left[p^{r}_{X_{n+1},K_{n+1}}(1-p_{X_{n+1},K_{n+1}})^{L_n-r}\,\vert\,X_{1\to\,n+1}\right]\right]
\nonumber\\
&=\esp\left[\mathbf 1\{L_n\ge r\}\binom{L_n}{r}\sum_{k\ge1}p^{1+r}_{X_{n+1},k}(1-p_{X_{n+1},k})^{L_n-r}\right]
\nonumber,
\end{align}
where the last line follows from point $(2)$ of Lemma \ref{lem:jk}. 
\end{proof}

\begin{proof}[Proof of Theorem \ref{tg}]
From Lemma \ref{lemma:repres} it follows that
\begin{eqnarray*}
\prob\{\mathcal L_n=r\} &=& \esp\left[\mathbf 1\{L_n= r\}\binom{L_n}{r}\sum_{k\ge1}p^{1+r}_{X_{n+1},k}(1-p_{X_{n+1},k})^{L_n-r}\right] \\
&&+\ \esp\left[\mathbf 1\{L_n> r\}\binom{L_n}{r}\sum_{k\ge1}p^{1+r}_{X_{n+1},k}(1-p_{X_{n+1},k})^{L_n-r}\right]\\
&=:& A_1(n) + A_2(n).
\end{eqnarray*}
Note that 
$$
A_1(n) =  \esp\left[\mathbf 1\{L_n=r\}\sum_{k\ge1}p^{1+r}_{X_{n+1},k}\right] \le \prob\{L_n= r\}\sup_{a\in A}\sum_{k=1}^{+\infty}p^{1+r}_{a,k}.
$$
Now, using Lemma \ref{lem:dgp} inside the expectation yields
\begin{align}
A_2(n) & \le \esp\left[\mathbf 1\{L_n>r\} \inf_{0\le\e\le 1}\{\alpha^{n,r}(\e)+\beta^{n,r}(\e)\}\right]
\label{th:tge4},
\end{align}
where we have denoted 
\begin{align}
\alpha^{n,r}(\e) & =\frac{c(r)\nu(X_{n+1},\e)}{L_n},
\nonumber\\
\beta^{n,r}(\e) & = 2^{1+r}\binom{L_n}{r}\int_{0}^{\e}\nu\left(X_{n+1},\frac u2\right)u^r\left(1-\frac u2\right)^{L_n-r}{\rm d}u.
\nonumber
\end{align}
Finally, observing the fact that
\begin{align*}
A_2(n) & \le \esp\left[ \inf_{0\le\e\le 1}\{\mathbf 1\{L_n>r\}\alpha^{n,r}(\e)+\mathbf 1\{L_n>r\}\beta^{n,r}(\e)\}\right]\\
& \le \inf_{0\le\e\le 1}\left\{\esp\left[\mathbf 1\{L_n>r\}\alpha^{n,r}(\e)\right]+\esp\left[\mathbf 1\{L_n>r\}\beta^{n,r}(\e)\right]\right\}\\
& \le \inf_{0\le\e\le 1}\left\{ a^{n,r}(\e)+b^{n,r}(\e)\right\},
\end{align*}
gives the result.
\end{proof}

\begin{proof}[Proof of Proposition \ref{prop: RV}]
By Lemma \ref{lemma:repres}, we have
\begin{align*}
\prob\{\mathcal L_n=r\}  &= \esp\left[\mathbf 1\{L_n>r\} \binom{L_n}{r}\sum_{k\ge1}p^{1+r}_{X_{n+1},k}(1-p_{X_{n+1},k})^{L_n-r}\right] \\
&+  \esp\left[\mathbf 1\{L_n=r\} \sum_{k\ge1}p^{1+r}_{X_{n+1},k}\right]\\
& =: E_1 + E_2.
\end{align*}
Corollary 2.2 from \citet{DGP16} implies that
$$
E_1 \le c_1(\alpha,r) \esp\left[\mathbf1\{L_n>r\} L_n^{-(\alpha-1)}\ell(L_n)\right] .
$$
From here, the results follows in the case where $r=0$ from the fact that $ \sum_{k\ge1}p_{X_{n+1},k}=1$. Now, assume that $r\ge1$. Taking $\epsilon=1/r$ in (2.4) of \citet{DGP16} implies
$$
E_2 \le \left(p_\vee^{r+1} r^{\alpha}\ell(r) + r^{-r}\right) \prob_{\eta}\{L_n=r\}.
$$
On the other hand, since $\sum_{k\ge1}p^{1+r}_{X_{n+1},k}\le \sum_{k\ge1}p_{X_{n+1},k}=1$, we also have
$$
E_2 \le \prob_{\eta}\{L_n=r\}.
$$
This completes the proof.
\end{proof}

\begin{proof}[Proof of Corollary \ref{cor: RV finite sample}]
Fix $\epsilon\in(0,\pi_\wedge)$, let
$$
A(n) = \{r+1\le L_n < n\epsilon\} \mbox{ and }
B(n) = \{L_n\ge (n\epsilon) \vee (r+1)\},
$$
and note that $A(n)\cup B(n)=\{r+1\le L_n\}$. We can write
\begin{align*}
 \esp_{\eta}\left[\mathbf 1\{L_n>r\} L_n^{-(1-\alpha)}\ell(L_n)\right] &=  \esp_{\eta}\left[\mathbf 1_{A(n)} L_n^{-(1-\alpha)}\ell(L_n)\right] +  \esp_{\eta}\left[\mathbf 1_{B(n)} L_n^{-(1-\alpha)}\ell(L_n)\right] = E_1+E_2.
\end{align*}
Now note that
\begin{eqnarray*}
E_1 \le (r+1)^{-(1-\alpha)}\ell(r+1)\prob_\eta\{r+1\le L_n< n\epsilon\}
\end{eqnarray*}
and
\begin{eqnarray*}
E_2 \le (n\epsilon)^{-(1-\alpha)}\ell(n\epsilon) \prob_\eta\{L_n\ge n\epsilon\}.
\end{eqnarray*}
Combining this with Proposition \ref{prop: RV} gives
\begin{eqnarray*}
\prob_{\eta}\{\mathcal L_n = r\} &\le&   \inf_{\epsilon\in(0,\pi_\wedge)}\left\{ c_1(\alpha,r)(n\epsilon)^{-(1-\alpha)}\ell(n\epsilon) \prob_\eta\{L_n\ge  n\epsilon\}  + c_2(\alpha,r)\prob_{\eta}\{L_n=r\}\right.\\
&&\left. \qquad
 + c_3(\alpha,r)\prob_\eta\{r+1\le L_n< n\epsilon\}  \right\}.
\end{eqnarray*}
From here, the result follows by applying Proposition \ref{prop: main for finite}.
\end{proof}

\subsection{Proofs for Section \ref{sec:asymp}}

To prove Theorem \ref{theprop}, we begin with two technical results. 

\begin{lem}\label{lemma: technical}
Let $(X_n)_{n\ge1}$ be an irreducible and aperiodic Markov chain on a finite state space $A$ and with stationary distribution $\pi = (\pi_a)_{a\in A}$. Let $\pi_\wedge=\min_{a\in A}\pi_a$ and let $L_n= \sum_{k=1}^n \mathbf 1\{X_k=X_{n+1}\}$.\\
1. For any $\beta\in\mathbb R$, any $\epsilon\in[0,\pi_\wedge)$, any $r>0$, and any initial distribution $\eta$ we have
\begin{eqnarray*}
\lim_{n\to\infty} n^{\beta} \prob_{\eta}\left\{\frac{L_n}{n}\le\epsilon \right\} =0
\end{eqnarray*}
and
$$
\lim_{n\to\infty}n^{\beta} \prob_{\eta}\{L_n=r\}=0.
$$
2.  If $\alpha\in[0,1]$ and $\ell\in\fS$, then, with probability $1$,
\begin{eqnarray}\label{eq: tech3}
\lim_{n\to\infty}\left( \frac{L_n^{-(1-\alpha)}\ell(L_n)}{n^{-(1-\alpha)}\ell(n)} - \pi_{X_{n+1}}^{-(1-\alpha)} \right)= 0
\end{eqnarray}
and for any $r\ge0$ and any initial distribution $\eta$
$$
\lim_{n\to\infty} \frac{\esp_{\eta}[\mathbf 1\{L_n>r\}L_n^{-(1-\alpha)}\ell(L_n)]}{n^{-(1-\alpha)}\ell(n)}  = \sum_{a\in A} \pi_a^\alpha .
$$
\end{lem}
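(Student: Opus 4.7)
The plan is to treat the two parts of the lemma separately, leaning on Proposition~\ref{prop: main for finite} for the concentration statements in part~1 and on classical ergodic/slowly-varying function properties for part~2.

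For part~1, both limits follow directly from Proposition~\ref{prop: main for finite}. The bound $\prob_\eta\{L_n = r\} \le \prob_\eta\{L_n \le r\}$ decays like $\exp(-cn)$ for fixed $r$, so multiplying by any $n^\beta$ still vanishes. For $\prob_\eta\{L_n/n \le \epsilon\}$ with $\epsilon\in[0,\pi_\wedge)$, I would apply the proposition with $r$ replaced by $\lfloor n\epsilon\rfloor$; the domain requirement $n > (2t_0 + \lfloor n\epsilon\rfloor\lambda + \lambda(1-\pi_\wedge))/(\lambda\pi_\wedge)$ rearranges to $n > (2t_0 + \lambda(1-\pi_\wedge))/(\lambda(\pi_\wedge-\epsilon))$, which is valid for large $n$ since $\pi_\wedge-\epsilon>0$, and the resulting bound is $O(\exp(-n(\lambda(\pi_\wedge-\epsilon)/t_0)^2/2))$, dominating $n^{-\beta}$ for every $\beta\in\mathbb R$.

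For the first claim of part~2, I would use the identity $L_n=L_n(X_{n+1})$ to write
\begin{equation*}
\frac{L_n^{-(1-\alpha)}\ell(L_n)}{n^{-(1-\alpha)}\ell(n)} - \pi_{X_{n+1}}^{-(1-\alpha)} = \sum_{a\in A}\mathbf 1\{X_{n+1}=a\}\left(\frac{L_n(a)^{-(1-\alpha)}\ell(L_n(a))}{n^{-(1-\alpha)}\ell(n)}-\pi_a^{-(1-\alpha)}\right).
\end{equation*}
Since $A$ is finite it is enough, for each fixed $a$, that the bracketed factor tends to $0$ a.s. The ergodic theorem for irreducible positive recurrent Markov chains gives $L_n(a)/n\to\pi_a>0$ a.s., so $(L_n(a)/n)^{-(1-\alpha)}\to\pi_a^{-(1-\alpha)}$. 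Since $\ell$ is locally bounded on $[1,\infty)$ and slowly varying, the uniform convergence theorem for slow variation yields $\ell(L_n(a))/\ell(n)\to1$ a.s.; multiplying the two convergences finishes the first claim.

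For the second claim, decompose analogously by $X_{n+1}$:
\begin{equation*}
\frac{\esp_\eta[\mathbf 1\{L_n>r\}L_n^{-(1-\alpha)}\ell(L_n)]}{n^{-(1-\alpha)}\ell(n)} = \sum_{a\in A}\esp_\eta\bigl[\mathbf 1\{X_{n+1}=a,\,L_n(a)>r\}\,g_n(a)\bigr],
\end{equation*}
where $g_n(a):=L_n(a)^{-(1-\alpha)}\ell(L_n(a))/[n^{-(1-\alpha)}\ell(n)]$. Conditioning on $\sigma(X_1,\dots,X_n)$ and applying the Markov property replaces $\mathbf 1\{X_{n+1}=a\}$ by $Q(X_n,a)$. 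I then split on the event $B_n(a):=\{L_n(a)>n\pi_\wedge/2\}$. On $B_n(a)^c$, part~1 gives exponential decay of the probability while Potter's bounds control $g_n(a)$ by a polynomial in $n$, so this contribution vanishes. On $B_n(a)$, $g_n(a)$ is uniformly bounded and $g_n(a)\to\pi_a^{-(1-\alpha)}$ a.s.; writing $Q(X_n,a)g_n(a)=\pi_a^{-(1-\alpha)}Q(X_n,a)+Q(X_n,a)(g_n(a)-\pi_a^{-(1-\alpha)})$, bounded convergence disposes of the residual, while $\esp_\eta[Q(X_n,a)]\to\pi_a$ (ergodic theorem) handles the leading term, yielding the limit $\pi_a\cdot\pi_a^{-(1-\alpha)}=\pi_a^\alpha$. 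Summing over $a$ gives $\sum_a\pi_a^\alpha$. The main obstacle is precisely this last step: because $\pi_{X_{n+1}}^{-(1-\alpha)}$ oscillates rather than converges, one cannot dominate and pass to the limit directly; the decomposition by $X_{n+1}$, Markov conditioning, and the exponential tail bound from part~1 together resolve this.
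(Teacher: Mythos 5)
Your proposal is correct and follows essentially the same route as the paper: part 1 from the exponential bound of Proposition~\ref{prop: main for finite} (including the substitution $r=\lfloor n\epsilon\rfloor$), the almost-sure limit from the Markov strong law plus the uniform convergence theorem for regular variation, and the expectation limit from the same split into $\{L_n<n\epsilon\}$ (killed by Potter bounds together with part 1) and $\{L_n\ge n\epsilon\}$ (handled by dominated convergence). The only difference is cosmetic: where you decompose over the value of $X_{n+1}$ and condition on the past to replace $\mathbf 1\{X_{n+1}=a\}$ by $Q(X_n,a)$, the paper adds and subtracts $\pi_{X_{n+1}}^{-(1-\alpha)}$ inside the expectation and evaluates $\lim_{n}\esp_{\eta}[\pi_{X_{n+1}}^{-(1-\alpha)}]=\sum_{a\in A}\pi_a^{\alpha}$ directly from convergence of the marginals to $\pi$ --- both devices resolve exactly the oscillation issue you flag at the end.
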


\begin{proof}
The first part follows immediately from the exponential bound in Proposition \ref{prop: main for finite}. We now turn to the second part. For ease of notation, set $h(x) = x^{-(1-\alpha)}\ell(x)$. 
Since the Markov chain is irreducible and aperiodic on a finite state space, it is recurrent and hence $\lim_{n\to\infty}L_n=\infty$ with probability $1$. Further, it satisfies the strong law of large numbers, which mean that for each $a\in A$, if $L_n(a) = \sum_{k=1}^n \mathbf 1\{X_k=a\}$, then $\lim_{n\to\infty} L_n(a)/n=\pi_a$ with probability $1$. Since $A$ is a finite set, with probability $1$, this convergence can be taken to be uniform in $a$. Let  $\Omega_0\subset\Omega$ with $\prob_\eta(\Omega_0)=1$, such that for any $\omega\in\Omega_0$ we have $\lim_{n\to\infty}L_n(\omega)=\infty$ and for any $\epsilon>0$ there exists an $N'_\epsilon(\omega)$ such that if $n\ge N'_\epsilon(\omega)$ then
$$
\left|\frac{L_n(\omega)}{n} - \pi_{X_{n+1}(\omega)}\right|<\epsilon.
$$
Now fix $\epsilon>0$ and $\omega\in\Omega_0$.  There exists an $N_\epsilon(\omega)>0$ such that if $n\ge N_\epsilon(\omega)$ then $\left|\frac{L_n(\omega)}{n} - \pi_{X_{n+1}(\omega)}\right|<.5\pi_\wedge$ and
$$
\left|\pi_{X_{n+1}(\omega)}^{-(1-\alpha)} -  \left(\frac{L_n(\omega)}{n}\right)^{-(1-\alpha)}\right| < \epsilon/2.
$$
Further, by the uniform convergence theorem for regularly varying functions, see e.g.\ Proposition 2.4 in \cite{Resnick:2007}, there is a $T_\epsilon$ such that, for any $x\in(.5\pi_{\wedge},1]$ and any $t\ge T_\epsilon$
$$
\left|\frac{h(xt)}{h(t)} - x^{-(1-\alpha)}\right| < \epsilon/2.
$$
Since $\frac{L_n}{n}\le 1$, it follows that, for $n\ge \max\{N_\epsilon(\omega),T_\epsilon\}$,
$$
\left|\frac{h\left(\frac{L_n(\omega)}{n} n\right)}{h(n)} - \pi_{X_{n+1}(\omega)}^{-(1-\alpha)}\right|\le   \left|\frac{h\left(\frac{L_n(\omega)}{n} n\right)}{h(n)} - \left(\frac{L_n(\omega)}{n}\right)^{-(1-\alpha)}\right| + \left|\pi_{X_{n+1}(\omega)}^{-(1-\alpha)} -  \left(\frac{L_n(\omega)}{n}\right)^{-(1-\alpha)}\right| < \epsilon,
$$
which proves \eqref{eq: tech3}. We now turn to the last part. Fix $\epsilon\in(0,\pi_\wedge)$ and let
$$
A(n) = \{r+1\le L_n < n\epsilon\} \mbox{ and }
B(n) = \{L_n\ge (n\epsilon) \vee (r+1)\}.
$$
Note that $A(n)\cup B(n)=\{r+1\le L_n\}$. We can write
\begin{eqnarray*}
&&\frac{\esp_{\eta}[\mathbf 1\{L_n>r\}L_n^{-(1-\alpha)}\ell(L_n)]}{n^{-(1-\alpha)}\ell(n)} \\
&&\qquad\qquad =  \frac{\esp_{\eta}[ \mathbf 1_{A(n)}L_n^{-(1-\alpha)}\ell(L_n)]}{n^{-(1-\alpha)}\ell(n)}+ \frac{\esp_{\eta}[\mathbf 1_{B(n)}L_n^{-(1-\alpha)}\ell(L_n)]}{n^{-(1-\alpha)}\ell(n)} \\
&&\qquad\qquad=: E_{A}(n) + E_{B}(n).
\end{eqnarray*}
Fix $\delta>0$, by the Potter bounds (see e.g.\ Theorem 1.5.6 in \cite{Bingham:Goldie:Teugels:1987}), there exists a $K_\delta>0$ such that
\begin{eqnarray*}
E_A(n) \le  K_\delta  \esp_{\eta}\left[\mathbf 1_{A(n)} \left( \frac{L_n}{n}\right)^{-(1-\alpha)-\delta} \right] 
\le  K_\delta  \prob_{\eta}(A(n)) n^{1-\alpha+\delta} \to0,
\end{eqnarray*}
where the convergence follows by the first part of this lemma. Similarly, 
\begin{eqnarray*}
\frac{\mathbf 1_{B(n)}L_n^{-(1-\alpha)}\ell(L_n)}{n^{-(1-\alpha)}\ell(n)} &\le& K_\delta \mathbf 1_{B(n)} \left( \frac{L_n}{n}\right)^{-(1-\alpha)-\delta} \le K_\delta \epsilon^{-(1-\alpha)-\delta}.
\end{eqnarray*}
Combining this with the fact that $\pi_{X_{n+1}}^{-(1-\alpha)}$ is bounded means that we can use dominated convergence to get
\begin{eqnarray*}
 \lim_{n\to\infty}E_B(n) &=& \lim_{n\to\infty}(E_{B} (n) + \esp_{\eta}[\pi_{X_{n+1}}^{-(1-\alpha)}] - \esp_{\eta}[\pi_{X_{n+1}}^{-(1-\alpha)}])\\
&=& \esp_{\eta}\left[\lim_{n\to\infty} \left(\frac{\mathbf 1_{B(n)}L_n^{-(1-\alpha)}\ell(L_n)}{n^{-(1-\alpha)}\ell(n)} - \pi_{X_{n+1}}^{-(1-\alpha)}\right)\right] +  \lim_{n\to\infty} \esp_{\eta}[\pi_{X_{n+1}}^{-(1-\alpha)}] \\
&=&\lim_{n\to\infty} \esp_{\eta}[\pi_{X_{n+1}}^{-(1-\alpha)}] = \esp_{\pi}[\pi_{X_{1}}^{-(1-\alpha)}]  =\sum_{a\in A} \pi_a^\alpha,
\end{eqnarray*}
where the third equality follows from \eqref{eq: tech3} and the fact that, with probability $1$, there exists a (random) $N$ such that $\mathbf 1_{B(n)}=1$ for all $n\ge N$, and the fourth equality follows by  the fact that the distribution of $X_{n}$ converges weakly to $\pi$, Skorokhod's representation theorem, and dominated convergence. 
\end{proof}

\begin{lem}\label{lemma: general lim}
Let $|A|<\infty$ and let $P\in RV_\alpha(C,\ell)$. When $\alpha=0$ assume, in  addition, that \eqref{eq: for alpha 0 in} holds.

1. Let $X_n$ be any sequence of $A$-valued random variables and let $N_n$ is a sequence of $\mathbb N$-valued random variables such that, with probability $1$, $N_n\to\infty$ as $n\to\infty$. With probability $1$,
$$
\lim_{n\to\infty}\left( \frac{{N_n\choose r}\sum_{k=1}^\infty p_{X_{n+1},k}^{r+1} (1-p_{X_{n+1},k})^{N_n-r}}{h_{\alpha,r}(N_n)} - F(X_{n+1}, r)\right)=0.
$$
2.  Let $X=(X_k)_{k\ge1}$ be an irreducible and aperiodic Markov chain with state space $A$ and stationary distribution $\pi=(\pi_a)_{a\in A}$. If $L_n= \sum_{k=1}^n \mathbf 1\{X_k=X_{n+1}\}$, then, with probability $1$,
$$
\lim_{n\to\infty}\left(\frac{\binom{L_n}{r}\sum_{k=1}^\infty p^{1+r}_{X_{n+1},k}(1-p_{X_{n+1},k})^{L_n-r}}{h_{\alpha,r}(n)}- \pi_{X_{n+1}}^{-(1-\alpha)}F(X_{n+1},r)\right) =0.
$$
\end{lem}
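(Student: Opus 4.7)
The plan is to derive both parts from the uniform-in-$a\in A$ version of \eqref{eq: RV} (available because $|A|<\infty$) and to obtain Part 2 by bootstrapping Part 1 through the ratio $h_{\alpha,r}(L_n)/h_{\alpha,r}(n)$, which is controlled by Lemma \ref{lemma: technical}.

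For Part 1, I would fix a realisation $\omega$ in the probability-one event $\{N_n\to\infty\}$ and run a pointwise $\varepsilon$-argument. The uniform version of \eqref{eq: RV} supplies, for each $\varepsilon>0$, an integer $M_\varepsilon$ such that
$$\sup_{a\in A}\left|\frac{\binom{m}{r}\sum_{k\ge 1}p_{a,k}^{r+1}(1-p_{a,k})^{m-r}}{h_{\alpha,r}(m)}-F(a,r)\right|<\varepsilon\quad\text{for all }m\ge M_\varepsilon.$$
Since $N_n(\omega)\ge M_\varepsilon$ eventually, instantiating the bound at $m=N_n(\omega)$ and $a=X_{n+1}(\omega)$ gives the claim. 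The argument is purely pointwise and makes no use of the joint law of $(X_n,N_n)$.

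For Part 2, I would decompose
$$\frac{\binom{L_n}{r}\sum_{k\ge 1}p_{X_{n+1},k}^{r+1}(1-p_{X_{n+1},k})^{L_n-r}}{h_{\alpha,r}(n)}=\frac{h_{\alpha,r}(L_n)}{h_{\alpha,r}(n)}\cdot\frac{\binom{L_n}{r}\sum_{k\ge 1}p_{X_{n+1},k}^{r+1}(1-p_{X_{n+1},k})^{L_n-r}}{h_{\alpha,r}(L_n)}.$$
Recurrence of the finite-state chain gives $L_n\to\infty$ a.s., so Part 1 applied with $N_n=L_n$ forces the second factor to converge a.s.\ to $F(X_{n+1},r)$. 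For the first factor, in the ``otherwise'' regime $h_{\alpha,r}(x)=x^{-(1-\alpha)}\ell(x)$, Lemma \ref{lemma: technical} Part 2 yields $h_{\alpha,r}(L_n)/h_{\alpha,r}(n)\to\pi_{X_{n+1}}^{-(1-\alpha)}$ directly; in the boundary regimes $\alpha=0$ and $\alpha=1,r=0$, where $h_{\alpha,r}$ is itself slowly varying, I would invoke the uniform convergence theorem for slowly varying functions together with $L_n/n\to\pi_{X_{n+1}}\ge\pi_\wedge>0$ a.s.\ to obtain the analogous limit. Since $F(a,r)$ and $\pi_a^{-(1-\alpha)}$ are bounded on the finite alphabet $A$, the identity $a_nb_n-AB=(a_n-A)b_n+A(b_n-B)$ passes these two limits to the product.

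The main obstacle is the boundary case analysis for the ratio $h_{\alpha,r}(L_n)/h_{\alpha,r}(n)$: in those regimes $h_{\alpha,r}$ is slowly rather than regularly varying, so one must verify by hand, using the ergodicity of the chain (in particular $\pi_\wedge>0$ and the strong law of large numbers for occupation times), that the limit aligns with the value $\pi_{X_{n+1}}^{-(1-\alpha)}$ prescribed by the statement.
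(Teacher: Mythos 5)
Your argument for Part 1 is the paper's argument verbatim: uniformity of \eqref{eq: RV} over the finite alphabet plus a pointwise $\varepsilon$--$M_\varepsilon$ argument along $N_n(\omega)\to\infty$ (the paper likewise remarks that no joint law of $(X_n,N_n)$ is used). For Part 2 you also follow the paper's route: apply Part 1 with $N_n=L_n$ (recurrence giving $L_n\to\infty$ a.s.), use boundedness of $F(a,r)$ and $\pi_a^{-(1-\alpha)}$ on the finite $A$, and reduce everything to the single limit $h_{\alpha,r}(L_n)/h_{\alpha,r}(n)\to\pi_{X_{n+1}}^{-(1-\alpha)}$ a.s. The cosmetic difference between the paper's three-term expansion $(a_n-A)(b_n-B)+A(b_n-B)+B(a_n-A)$ and your two-term one is immaterial.

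The one substantive divergence is your treatment of the boundary regimes, and there your proposed tool does not produce the stated limit when $\alpha=0$. If $h_{0,r}$ is genuinely slowly varying, then the uniform convergence theorem together with $L_n/n\to\pi_{X_{n+1}}\ge\pi_\wedge>0$ forces $h_{0,r}(L_n)/h_{0,r}(n)\to1$, whereas the statement requires the value $\pi_{X_{n+1}}^{-(1-0)}=\pi_{X_{n+1}}^{-1}$; your closing sentence concedes that one ``must verify by hand'' that these align, but for a slowly varying $h_{0,r}$ they simply do not, so as written the $\alpha=0$ case is open. What the paper actually does is cite \eqref{eq: tech3} of Lemma \ref{lemma: technical}, which is stated for \emph{every} pair $(\alpha,\ell)$ with $\alpha\in[0,1]$ and $\ell\in\fS$ and so disposes of all three regimes uniformly: in the regime $\alpha=1$, $r=0$ one applies it with $\ell=\ell_1$ and recovers the limit $\pi_{X_{n+1}}^{0}=1$ (consistent with your slow-variation argument, so that sub-case of yours is fine); in the regime $\alpha=0$ one applies it with $\ell=\ell_0$ to the ratio $L_n^{-1}\ell_0(L_n)/\bigl(n^{-1}\ell_0(n)\bigr)$, i.e.\ one reads $h_{0,r}(x)$ as $x^{-1}\ell_0(x)$ --- regularly varying of index $-1$, not slowly varying --- which is what produces the factor $\pi_{X_{n+1}}^{-1}$ and makes the stated limit, the normalization in \eqref{eq: RV}, and Proposition \ref{prop:main RV limit} mutually consistent. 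Replacing your ad hoc boundary discussion with this single application of \eqref{eq: tech3} closes the gap.
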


Note that, in the first part, the sequences $\{X_{n}\}$ and $\{N_n\}$ may be dependent or independent.

\begin{proof}
We begin with the first part. Let $\Omega_0\in\mathcal F$ be a set with $\prob(\Omega_0)=1$ such that, for any $\omega\in\Omega_0$, $N_n(\omega)\to\infty$. Fix $\epsilon>0$ and $\omega\in\Omega_0$. Since \eqref{eq: RV} holds uniformly in $a$, it follows that there is an $M_\epsilon>0$ such that for all $m\ge M_\epsilon$ and all $n\ge1$ we have
\begin{eqnarray*}
\left|\frac{{m\choose r}\sum_{k=1}^\infty p_{X_{n+1}(\omega),k}^{r+1} (1-p_{X_{n+1}(\omega),k})^{m-r}}{ h_{\alpha,r}(m)}- F(X_{n+1}(\omega),r) \right|<\epsilon.
\end{eqnarray*}
Now let $M'_\epsilon(\omega)>0$ be a number such that, if $n\ge M'_\epsilon(\omega)$ then $N_n(\omega)\ge  M_\epsilon$. For all such $n$, the above holds with $N_n(\omega)$ in place of $m$. From here, the first part follows. For the second part, we have
\begin{eqnarray*}
&&\frac{\binom{L_n}{r}\sum_{k=1}^\infty p^{1+r}_{X_{n+1},k}(1-p_{X_{n+1},k})^{L_n-r}}{h_{\alpha,r}(n)}- \pi_{X_{n+1}}^{-(1-\alpha)}F(X_{n+1},r) \\
&& \quad = \left(\frac{h_{\alpha,r}(L_n)}{h_{\alpha,r}(n)} - \pi_{X_{n+1}}^{-(1-\alpha)}\right)\left( \frac{\binom{L_n}{r}\sum_{k=1}^\infty p^{1+r}_{X_{n+1},k}(1-p_{X_{n+1},k})^{L_n-r}}{h_{\alpha,r}(L_n)}- F(X_{n+1},r)\right) \\
&&\qquad+\pi_{X_{n+1}}^{-(1-\alpha)} \left( \frac{\binom{L_n}{r}\sum_{k=1}^\infty p^{1+r}_{X_{n+1},k}(1-p_{X_{n+1},k})^{L_n-r}}{h_{\alpha,r}(L_n)}- F(X_{n+1},r)\right) \\
&&\qquad + F(X_{n+1},r) \left( \frac{h_{\alpha,r}(L_n)}{h_{\alpha,r}(n)} - \pi_{X_{n+1}}^{-(1-\alpha)} \right).
\end{eqnarray*}
Since the Markov chain $X$ is irreducible on a finite state space, all of its states are recurrent and hence $\lim_{n\to\infty}L_n=\infty$ with probability $1$. Thus, by the first part of this lemma, the fact that $\max_{a\in A}F(a,r)<\infty$, and the fact that $\pi_{X_{n+1}}^{-(1-\alpha)}\le \left(\min_a\pi_a\right)^{-(1-\alpha)}$, it suffices to show that, with probability $1$,
$$
\lim_{n\to\infty}\left(  \frac{h_{\alpha,r}(L_n)}{h_{\alpha,r}(n)} - \pi_{X_{n+1}}^{-(1-\alpha)} \right)= 0,
$$
which holds by Lemma \ref{lemma: technical}.
\end{proof}

\begin{proof}[Proof of Theorem \ref{theprop}]
Note that, by Lemma \ref{lemma:repres}
\begin{eqnarray*}
\prob_\eta\{\mathcal L_n=r\} &=& \esp_{\eta}\left[\mathbf 1\{L_n>r\}\binom{L_n}{r}\sum_{k\ge1}p^{1+r}_{X_{n+1},k}(1-p_{X_{n+1},k})^{L_n-r}\right]\\
&&\qquad + \esp_{\eta}\left[\mathbf 1\{L_n=r\}\sum_{k\ge1}p^{1+r}_{X_{n+1},k}\right] =:E_1+E_2.
\end{eqnarray*}

We begin with $E_2$. Since $\sum_{k\ge1}p^{1+r}_{X_{n+1},k}\le \sum_{k\ge1}p_{X_{n+1},k}=1$, it follows that
\begin{eqnarray*}
\frac{E_2}{h_{\alpha,r}(n)}=\frac{\esp_{\eta}\left[\mathbf 1\{L_n=r\}\sum_{k\ge1}p^{1+r}_{X_{n+1} ,k}\right]}{h_{\alpha,r}(n)} &\le& \frac{P_{\eta}\{L_n=r\}}{h_{\alpha,r}(n)} \to0,
\end{eqnarray*}
where the convergence follows by Lemma \ref{lemma: technical}. We next turn to $E_1$. Note that \eqref{eq: RV}, the fact that $|A|<\infty$, and the fact that $h_{\alpha,r}$ is locally bounded implies that there is a constant $K'>0$ depending only on $r$ with
\begin{eqnarray*}
\frac{\binom{L_n}{r}\sum_{k\ge1}p^{1+r}_{X_{n+1},k}(1-p_{X_{n+1},k})^{L_n-r}}{h_{\alpha,r}(n)} 
&\le& K'\frac{h_{\alpha,r}(L_n)}{h_{\alpha,r}(n)}\\
&\le& H_\delta K'\left( \frac{L_n}{n}\right)^{-(1-\alpha)-\delta},
\end{eqnarray*}
for any $\delta>0$ and some $H_\delta>1$. Here the second inequality follows by the Potter bounds, see e.g.\ Theorem 1.5.6 in \cite{Bingham:Goldie:Teugels:1987}. For simplicity, set $K_\delta = K' H_\delta$. Fix $\epsilon\in(0,\pi_\wedge)$ and let
$$
A(n) = \{r+1\le L_n < n\epsilon\} \mbox{ and }
B(n) = \{L_n \ge (n\epsilon) \vee (r+1)\}.
$$
Note that $A(n)\cup B(n)=\{r+1\le L_n\}$. We can write
\begin{eqnarray*}
E_1&=& \esp_{\eta}\left[\mathbf 1_{A(n)}\binom{L_n}{r}\sum_{k\ge1}p^{1+r}_{X_{n+1},k}(1-p_{X_{n+1},k})^{L_n-r}\right]\\
&&\qquad + \esp_{\eta}\left[\mathbf 1_{B(n)}\binom{L_n}{r}\sum_{k\ge1}p^{1+r}_{X_{n+1},k}(1-p_{X_{n+1},k})^{L_n-r}\right] =: E_{1A} + E_{1B}.
\end{eqnarray*}
By Lemma \ref{lemma: technical}, we have
\begin{eqnarray*}
\frac{E_{1A} }{h_{\alpha,r}(n)} 
&\le&  K_\delta  \esp_{\eta}\left[\mathbf 1_{A(n)} \left( \frac{L_n}{n}\right)^{-(1-\alpha)-\delta} \right] \\
&\le&  K_\delta  \prob_{\eta}(A(n)) n^{1-\alpha+\delta} \to0.
\end{eqnarray*}
Similarly, 
\begin{eqnarray*}
\frac{\mathbf 1_{B(n)}\binom{L_n}{r}\sum_{k\ge1}p^{1+r}_{X_{n+1},k}(1-p_{X_{n+1},k})^{L_n-r}}{h_{\alpha,r}(n)} &\le& K_\delta \mathbf 1_{B(n)} \left( \frac{L_n}{n}\right)^{-(1-\alpha)-\delta}\\
&\le& K_\delta \epsilon^{-(1-\alpha)-\delta}.
\end{eqnarray*}
Combining this with the fact that $\pi_{X_{n+1}}^{-(1-\alpha)}F(X_{n+1},r)$ is bounded for fixed $r$ means that we can use dominated convergence to get
\begin{eqnarray*}
 \lim_{n\to\infty}\frac{E_{1B} }{h_{\alpha,r}(n)} &=& 
\esp_{\eta}\left[\lim_{n\to\infty} \left(\mathbf 1_{B(n)} \frac{\binom{L_n}{r}\sum_{k\ge1}p^{1+r}_{X_{n+1},k}(1-p_{X_{n+1},k})^{L_n-r}}{h_{\alpha,r}(n)} - \pi_{X_{n+1}}^{-(1-\alpha)} F(X_{n+1},r) \right)\right] \\
&&\qquad+\lim_{n\to\infty} \esp_{\eta}[\pi_{X_{n+1}}^{-(1-\alpha)}F(X_{n+1},r)] \\
&=& \lim_{n\to\infty} \esp_{\eta}[\pi_{X_{n+1}}^{-(1-\alpha)}F(X_{n+1},r)] = \esp_{\pi}[\pi_{X_{1}}^{-(1-\alpha)}F(X_{1},r)]  = \sum_{a\in A} \pi_a^\alpha F(a,r),
\end{eqnarray*}
where the second equality follows from Lemma \ref{lemma: general lim} and the fact that, with probability $1$, there exists a (random) $N$ such that $\mathbf 1_{B(n)}=1$ for all $n\ge N$. The third equality, follows from the fact that the distribution of $X_{n}$ converges weakly to $\pi$, Skorokhod's representation theorem, and dominated convergence. This gives the first part of Theorem \ref{theprop}. The second part follows from the first and Lemma \ref{lemma: technical}.
\end{proof}

\appendix

\section{Regular variation}\label{RV appendix} 

In this appendix, we briefly review several basic facts about regularly varying distributions on $\mathbb N_+$. First, we recall that for a probability measure $P=\{p_{k}\}_{k\in\mathbb N_+}$ on $\mathbb N_+$, the counting measure $\boldsymbol{\nu}_P$ is defined by \eqref{eq: count meas} and the counting function $\nu$ is defined by \eqref{eq: count func}.
  
\begin{defi}
A probability distribution $P=\{p_{k}\}_{k\ge 1}$ with counting function $\nu$ is said to be regularly varying, with exponent $\alpha\in[0,1]$,  if 
\[\lim_{\e\to 0}\frac{\nu(\e)}{\e^{-\alpha}\ell(1/\e)}=1,\]
for function $\ell\in\fS$. 
In this case, we write $P\in RV_{\alpha}(\ell)$.
\end{defi}

To motivate this definition, we recall the following fact from \cite{GHP07}. For $\alpha\in(0,1)$, we have $P\in RV_{\alpha}(\ell)$ if and only if
$$
p_k\sim k^{-1/\alpha} \ell^*(k) \mbox{ as } k\to\infty
$$
for some $\ell^*\in\fS$, which is, in general, different from $\ell$.  When $\alpha=0$, a sufficient condition for $P\in RV_{\alpha}(\ell)$ is that there exists an $\ell_0\in\fS$ with
\begin{eqnarray}\label{eq: for alpha 0}
\int_{[0,\varepsilon]} x \boldsymbol{\nu}_P(\rd x) =\sum_{k\ge1}p_k \mathbf1\{p_k\le\varepsilon\} \sim \varepsilon \ell_0(1/\varepsilon) \mbox{  as }\varepsilon\to0.
\end{eqnarray}
In this case, we necessarily have
$$
\ell(x) \sim \int_x^\infty u^{-1}\ell_0(u)\mathrm du \mbox{ as } x\to\infty
$$
and $\ell_0(x)/\ell(x)\to0$ as $x\to\infty$, see Proposition 15 in \cite{GHP07}. We will generally assume that \eqref{eq: for alpha 0} holds in this case.

\begin{pro}\label{prop:main RV limit}
Let $P=\{p_{k}\}_{k\ge 1}\in RV_\alpha(\ell)$. If $\alpha\in(0,1)$, then for all $r\ge 0$,
\begin{eqnarray}\label{eq:main RV limit}
\lim_{n\to\infty}\frac{{n\choose r}\sum_{k=1}^\infty p_{k}^{r+1} (1-p_{k})^{n-r}}{n^{\alpha-1}\ell(n)} = \frac{\alpha \Gamma(r+1-\alpha)}{r!}.
\end{eqnarray}
 If $\alpha=0$ and \eqref{eq: for alpha 0} holds, then for every $r\ge0$ 
\begin{eqnarray*}
\lim_{n\to\infty}\frac{\sum_{k=1}^\infty p^{r+1}_{k} (1-p_{k})^{n-r}}{n^{-1}\ell_0(n)} = 1.
\end{eqnarray*}
If $\alpha=1$, then for every $r\ge1$ the result in \eqref{eq:main RV limit} holds. 
If $\alpha=1$ and $r=0$ then
\begin{eqnarray*}
\lim_{n\to\infty}\frac{\sum_{k=1}^\infty p_{k} (1-p_{k})^{n}}{\ell_1(n)} = 1,
\end{eqnarray*}
where $\ell_1(x)=\int_x^\infty u^{-1}\ell(u)\mathrm du$ for $x>1$ is a function with $\ell_1\in\fS$ and $\ell(x)/\ell_1(x)\to0$ as $x\to\infty$.
\end{pro}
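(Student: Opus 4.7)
The plan is to represent the sum as an integral against the counting measure $\boldsymbol{\nu}_P$ and then push the regular variation of $\nu$ through a Tauberian-type argument. First, write
$$T_n := \sum_{k\ge 1} p_k^{r+1}(1-p_k)^{n-r} = \int_{(0,1]} u^{r+1}(1-u)^{n-r}\,\boldsymbol{\nu}_P(\rd u),$$
and perform the scaling substitution $s = nu$, rewriting this as
$$T_n = \frac{1}{n^{r+1}}\int_{(0,n]} s^{r+1}\left(1-\frac{s}{n}\right)^{n-r}\boldsymbol{\mu}_n(\rd s),$$
where $\boldsymbol{\mu}_n$ denotes the image of $\boldsymbol{\nu}_P$ under $u \mapsto nu$, so that $\boldsymbol{\mu}_n([s,\infty))=\nu(s/n)$.

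For $\alpha\in(0,1)$, the regular variation hypothesis $\nu(\e)\sim \e^{-\alpha}\ell(1/\e)$ and the uniform convergence theorem for regularly varying functions (Proposition 2.4 in Resnick, 2007) imply, for any $0<a<b<\infty$,
$$\frac{\boldsymbol{\mu}_n([a,b])}{n^{\alpha}\ell(n)} = \frac{\nu(a/n)-\nu(b/n)}{n^{\alpha}\ell(n)} \longrightarrow a^{-\alpha}-b^{-\alpha},$$
so that $\boldsymbol{\mu}_n/(n^{\alpha}\ell(n))$ converges vaguely on $(0,\infty)$ to $\alpha s^{-\alpha-1}\,\rd s$. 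Since $(1-s/n)^{n-r}\to e^{-s}$ pointwise on $(0,\infty)$ and is bounded by $e^{-s/2}$ once $n\ge 2r$, the combination of vague convergence on compacta with a Potter-bounds uniform-integrability control at $0$ and at $\infty$ yields
$$\frac{T_n}{n^{\alpha-r-1}\ell(n)} \longrightarrow \int_0^\infty s^{r+1}e^{-s}\,\alpha s^{-\alpha-1}\,\rd s = \alpha\,\Gamma(r+1-\alpha).$$
Multiplying by $\binom{n}{r}\sim n^r/r!$ then gives \eqref{eq:main RV limit}. The same argument covers $\alpha=1$, $r\ge 1$, where $\Gamma(r+1-\alpha)=\Gamma(r)<\infty$ and $s^{r-\alpha}e^{-s}$ remains integrable near $0$.

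The case $\alpha=0$ requires a different normalization because the candidate limit measure $\alpha s^{-\alpha-1}\rd s$ vanishes. The supplementary hypothesis \eqref{eq: for alpha 0} rewrites as $\int_{(0,\e]}u\,\boldsymbol{\nu}_P(\rd u)\sim \e\,\ell_0(1/\e)$, which provides direct control on the mass of small atoms. Since the contribution to $T_n$ is concentrated on $p_k$ of order $1/n$, approximating $(1-p_k)^{n-r}$ by $e^{-np_k}$ and applying an integration by parts together with Karamata's theorem on integrals of slowly varying functions gives $T_n\sim n^{-1}\ell_0(n)$.

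The main obstacle is the case $\alpha=1$, $r=0$, where the formal limit $\int_0^\infty s^{-1}e^{-s}\rd s$ diverges at the origin, so the normalization must absorb the extra slowly varying factor $\ell_1(n)=\int_n^\infty u^{-1}\ell(u)\rd u$. Starting again from
$$T_n = \frac{1}{n}\int_{(0,n]} s\left(1-\frac{s}{n}\right)^n \boldsymbol{\mu}_n(\rd s),$$
one isolates the logarithmic singularity near $s=0$. Using $\nu(s/n)\sim (s/n)^{-1}\ell(n/s)$ and a Karamata-type evaluation of $\int_{1/n}^{\,\cdot}u^{-1}\ell(1/u)\,\rd u$, this portion contributes $\ell_1(n)$, and the remaining range is of lower order thanks to $\ell(n)/\ell_1(n)\to 0$. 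Throughout, the recurring technical challenge is the uniform-in-$n$ control of the integrand near $0$ and near $n$, handled via Potter's bounds (Theorem 1.5.6 in Bingham, Goldie, and Teugels, 1987) and the exponential decay of $e^{-s}$ respectively.
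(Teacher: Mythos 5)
Your route is genuinely different from the paper's: the paper disposes of this proposition entirely by citation (Proposition 7 of Ohannessian and Dahleh for $\alpha\in(0,1)$; Propositions 14, 18 and 19 of Gnedin, Hansen and Pitman combined with the de-Poissonization Lemma 2 of Grabchak and Zhang for $\alpha\in\{0,1\}$), whereas you reprove the result from scratch via the scaling limit of the counting measure, $\boldsymbol{\mu}_n/(n^{\alpha}\ell(n))\to\alpha s^{-\alpha-1}\,\rd s$ vaguely, plus Potter-bound truncation at $0$ and $\infty$. For $\alpha\in(0,1)$ and for $\alpha=1$, $r\ge1$, your argument is correct and is essentially the proof inside the cited sources; what it buys is self-containedness and a transparent mechanism, at the cost of having to supply the uniform-integrability estimates and the de-Poissonization explicitly (you only gesture at these in the boundary cases, and you use but do not establish the facts that $\ell_1\in\fS$ and $\ell/\ell_1\to0$, which the paper gets from Proposition 14 of Gnedin, Hansen and Pitman).

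There is, however, a concrete problem in your $\alpha=0$ case. Carrying out your own recipe carefully: with $q=r+1$ and $\boldsymbol{\nu}_P^1([0,s])=\int_{[0,s]}x\,\boldsymbol{\nu}_P(\rd x)\sim s\,\ell_0(1/s)$, one gets $\int_{[0,s]}x^{q}\,\boldsymbol{\nu}_P(\rd x)\sim q^{-1}s^{q}\ell_0(1/s)$, and Karamata's Tauberian theorem then yields
\begin{equation*}
\sum_{k\ge1}p_k^{r+1}e^{-np_k}\;\sim\;\Gamma(q+1)\,q^{-1}\,n^{-q}\ell_0(n)\;=\;r!\,n^{-(r+1)}\ell_0(n),
\end{equation*}
which equals $n^{-1}\ell_0(n)$ only when $r=0$. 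For $r\ge1$ your asserted conclusion $T_n\sim n^{-1}\ell_0(n)$ is therefore off by a factor of order $n^{-r}$; the correct normalized statement is $\binom{n}{r}\sum_k p_k^{r+1}(1-p_k)^{n-r}\sim n^{-1}\ell_0(n)$, i.e.\ the binomial coefficient must be present just as in the other cases. (A quick sanity check with $p_k=2^{-k}$, for which $\ell_0$ is bounded, confirms $\sum_k p_k^{r+1}(1-p_k)^{n-r}\asymp n^{-(r+1)}$.) This discrepancy is inherited from the displayed formula you were asked to prove, which omits $\binom{n}{r}$ in the $\alpha=0$ case, but your sketch should have detected it: the Karamata computation you invoke produces the $r!\,n^{-(r+1)}$ rate, not $n^{-1}$. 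You should either restrict that display to $r=0$ or reinstate the factor $\binom{n}{r}$ before claiming the limit is $1$.
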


\begin{proof}
For $\alpha\in(0,1)$ this is Proposition 7 in \cite{OD12}.  For $\alpha=1$ the result follows by combining Proposition 18 in \cite{GHP07} with Lemma 2 in \cite{GZ17}. Similarly, for $\alpha=0$ the result follows by combining Proposition 19 in \cite{GHP07} with Lemma 2 in \cite{GZ17}. The facts about $\ell_1$ are given in Proposition 14 of \cite{GHP07}.
\end{proof}

\begin{pro}\label{prop:main RV limit to zero}
Fix $\alpha\in[0,1]$ and $\ell\in\fS$. When $\alpha\ne0$ assume that
\begin{eqnarray}\label{eq:RV to zero}
\lim_{\e\to 0}\frac{\nu(\e)}{\e^{-\alpha}\ell(1/\e)}=0
\end{eqnarray}
and when $\alpha=0$ assume that
\begin{eqnarray*}
\lim_{\e\to 0}\frac{\int_{[0,\e]} x \boldsymbol{\nu}_P(\rd x) }{ \varepsilon \ell(1/\e)} =0.
\end{eqnarray*}
If $\alpha\in[0,1)$, then for all $r\ge 0$,
\begin{eqnarray}\label{eq:RV to zero res}
\lim_{n\to\infty}\frac{{n\choose r}\sum_{k=1}^\infty p_{k}^{r+1} (1-p_{k})^{n-r}}{n^{\alpha-1}\ell(n)} = 0.
\end{eqnarray}
If $\alpha=1$ then for all $r\ge1$ the result in \eqref{eq:RV to zero res} holds. 
If $\alpha=1$ and $r=0$ then
\begin{eqnarray*}
\lim_{n\to\infty}\frac{\sum_{k=1}^\infty p_{k} (1-p_{k})^{n}}{\ell_1(n)} = 0,
\end{eqnarray*}
where $\ell_1(x)$ is derived from $\ell$ as in Proposition \ref{prop:main RV limit}.
\end{pro}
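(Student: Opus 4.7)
The plan is to deduce Proposition \ref{prop:main RV limit to zero} from Proposition \ref{prop:main RV limit} by a simple ``comparison, then let $\eta\downarrow 0$'' argument. Heuristically, the limit computed in Proposition \ref{prop:main RV limit} scales linearly with the leading constant of the regularly varying envelope of $\nu$, so replacing that constant by an arbitrarily small $\eta>0$ should force the limit to vanish. Concretely, I would rewrite $\sum_{k\ge1} p_k^{r+1}(1-p_k)^{n-r}=\int_{(0,1]} x^{r+1}(1-x)^{n-r}\boldsymbol{\nu}_P(\rd x)$ and, for $\alpha\ne 0$, use the hypothesis to pick $\delta\in(0,1)$ with $\nu(\e)\le \eta\,\e^{-\alpha}\ell(1/\e)$ on $(0,\delta)$. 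Splitting the integral at $\delta$, the contribution from $[\delta,1]$ involves at most $\nu(\delta)<\infty$ atoms, each of weight $(1-\delta)^{n-r}$, so after multiplication by $\binom{n}{r}$ it decays geometrically in $n$ and its ratio to $n^{\alpha-1}\ell(n)$ (or $\ell_1(n)$ in the $\alpha=1,\,r=0$ case) tends to $0$. For the integral on $(0,\delta)$ I would integrate by parts to introduce $\nu$ explicitly, substitute the bound $\nu(x)\le \eta x^{-\alpha}\ell(1/x)$, and then repeat the Karamata-type asymptotic calculation (dominated convergence after the substitution $x=u/n$, using the uniform convergence theorem for slowly varying functions) that underlies the proof of Proposition \ref{prop:main RV limit}. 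The outcome is an upper bound
$$\limsup_{n\to\infty}\frac{\binom{n}{r}\sum_{k\ge1} p_k^{r+1}(1-p_k)^{n-r}}{h(n)}\le \eta\,K(\alpha,r),$$
where $K(\alpha,r)$ is the finite constant appearing in the limit of Proposition \ref{prop:main RV limit} and $h(n)$ is the appropriate normalising sequence. Sending $\eta\downarrow 0$ closes the argument.

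A slicker variant of the same plan, which I would actually try first, is to construct an auxiliary probability distribution $\tilde P$ on $\N_+$ whose counting function $\tilde\nu$ satisfies $\tilde\nu(\e)\ge\nu(\e)$ for all small $\e$ and $\tilde\nu(\e)\sim \eta\,\e^{-\alpha}\ell(1/\e)$. Then Proposition \ref{prop:main RV limit} applied to $\tilde P$ gives an upper bound of order $\eta\,K(\alpha,r)$ on the quantity of interest as a black box, after which letting $\eta\downarrow 0$ finishes the proof. The auxiliary $\tilde P$ can be built atom by atom from $\tilde\nu(\e)=\lceil \eta\,\e^{-\alpha}\ell(1/\e)\rceil$ with a routine normalisation on a finite head, which is why this route amounts to only a mild technical exercise and avoids redoing any Karamata computation.

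The main obstacle is the $\alpha=0$ case (and the $\alpha=1,\,r=0$ sub-case), where the controlling hypothesis is naturally phrased in terms of the truncated mean $\int_{[0,\e]}x\,\boldsymbol{\nu}_P(\rd x)$ rather than $\nu$ itself, and the normalising sequence $\ell_1(n)$ (resp.\ $n^{-1}\ell(n)$) is no longer a power-law of the right scale. Here I would reorganise the integration-by-parts step around the measure $x\,\boldsymbol{\nu}_P(\rd x)$ instead of $\boldsymbol{\nu}_P(\rd x)$, mirroring the treatment of Proposition 19 in \cite{GHP07} together with Lemma 2 in \cite{GZ17}; once this reorganisation is in place, the same ``bound by $\eta$ times the Karamata limit, let $\eta\downarrow 0$'' scheme goes through. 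I expect the full proof to be structurally parallel to that of Proposition \ref{prop:main RV limit}, with the only substantive change being that each quantitative constant is arbitrarily small instead of a fixed positive number.
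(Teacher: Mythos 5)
Your first variant is, in essence, the paper's own argument: the paper also fixes a small $\delta>0$, uses the hypothesis to write $\nu(s)\le \delta s^{-\alpha}\ell(1/s)$ for small $s$, carries this through a Karamata computation to get a bound proportional to $\delta$, and lets $\delta\to0$. The only real difference is technical routing: the paper works with the measure $\boldsymbol\nu_P^q(\rd x)=x^q\boldsymbol\nu_P(\rd x)$, shows $\boldsymbol\nu_P^q([0,s])=o(s^{q-\alpha}\ell(1/s))$ via Fubini and Karamata's theorem, converts this to the Poissonized sum $\Phi_q(n)=\frac{n^q}{q!}\sum_k p_k^qe^{-np_k}$ by Karamata's Tauberian theorem, and then de-Poissonizes using Lemma 2 of \cite{GZ17}; for $\alpha=0$ it disposes of the head with the one-line bound $\boldsymbol\nu_P^q([0,s])\le s^{q-1}\boldsymbol\nu_P^1([0,s])$, which is cleaner than reorganising an integration by parts. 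Your direct dominated-convergence computation on the binomial sum would also work, but note one point your sketch glosses over: after integrating by parts you face $\int_0^\delta G'(u)\nu(u)\,\rd u$ with $G(u)=u^{r+1}(1-u)^{n-r}$, and $G'$ changes sign at $u=(r+1)/(n+1)$; you may only substitute the upper bound for $\nu$ on the region where $G'\ge 0$ (discarding the nonpositive part), which still yields a finite constant times $\delta$ but not literally the constant of Proposition \ref{prop:main RV limit}.

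The ``slicker'' variant you say you would try first does not work as a black box. Domination of counting functions, $\tilde\nu(\e)\ge\nu(\e)$ for all $\e$, controls $\boldsymbol\nu_{\tilde P}([\e,1])$ versus $\boldsymbol\nu_P([\e,1])$, i.e.\ integrals of \emph{monotone} functions against the two measures; but the kernel $x^{r+1}(1-x)^{n-r}$ is unimodal, not monotone, so $\tilde\nu\ge\nu$ does not imply $\sum_k\tilde p_k^{r+1}(1-\tilde p_k)^{n-r}\ge\sum_k p_k^{r+1}(1-p_k)^{n-r}$ (extra atoms of $\tilde P$ placed where the kernel is small contribute nothing, while $P$ may concentrate its atoms near the mode). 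Repairing this forces you to split at the mode and redo exactly the explicit Karamata computation you were trying to avoid, at which point you are back to the first variant. There is also the secondary issue that for $\alpha=0$ the hypothesis constrains $\int_{[0,\e]}x\,\boldsymbol\nu_P(\rd x)$ rather than $\nu$, so the auxiliary construction would have to be redesigned there in any case.
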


\begin{proof}
Let $q=r+1$, let $\boldsymbol\nu_P^q(\mathrm d x) = x^q \boldsymbol\nu_P(\mathrm d x)$, let $\Phi_q(n) = \frac{n^q}{q!} \sum_{k=1}^\infty p_k^q e^{-np_k}$, and note that
$$
\Phi_q(n) = \frac{n^q}{q!}\int_0^1 e^{-nx} \boldsymbol\nu_P^q(\mathrm d x).
$$
A standard application of Fubini's Theorem gives
$$
\boldsymbol\nu_P^q([0,s]) = \int_{[0,s]} x^q \boldsymbol\nu_P(\mathrm dx) =  q \int_0^s u^{q-1} \nu(u)\mathrm du - s^q\boldsymbol\nu_P((s,1]).
$$
Fix $\delta>0$. For $\alpha\ne0$, the assumptions imply that for small enough $s$ we have $\nu(s)\le \delta s^{-\alpha}\ell(1/s)$. It follows that for $\alpha\in(0,1)$ or $\alpha=1$ and $q\ge2$ we have
\begin{eqnarray*}
\lim_{s\to0^+} \frac{\boldsymbol\nu_P^q([0,s])}{s^{q-\alpha}\ell(1/s)} &=&  \lim_{s\to0^+} \frac{q \int_0^s u^{q-1} \nu(u)\mathrm du}{s^{q-\alpha}\ell(1/s)}
\le \delta  \lim_{s\to0^+} \frac{q \int_0^s u^{q-1-\alpha} \ell(1/u)\mathrm du}{s^{q-\alpha}\ell(1/s)} \\
&=&  \delta  \lim_{s\to0^+} \frac{q \int_{1/s}^\infty u^{-(q+1-\alpha)} \ell(u)\mathrm du}{s^{q-\alpha}\ell(1/s)} = \frac{q}{q-\alpha}\delta,
\end{eqnarray*}
where the last equality follows by Karamata's Theorem (Proposition 1.5.10 in \cite{Bingham:Goldie:Teugels:1987}).  Hence,
$$
\lim_{s\to0^+} \frac{\boldsymbol\nu_P^q([0,s])}{s^{q-\alpha}\ell(1/s)} = 0.
$$
Similarly, when $\alpha=1$ and $q=1$ we have
\begin{eqnarray*}
\lim_{s\to0^+} \frac{\nu^1_P([0,s])}{\ell_1(1/s)} &=&  \lim_{s\to0^+} \frac{ \int_0^s  \nu(u)\mathrm du}{\ell_1(1/s)}
\le \delta  \lim_{s\to0^+} \frac{\int_0^s u^{-1} \ell(1/u)\mathrm du}{\ell_1(1/s)} \\
&=&  \delta  \lim_{s\to0^+} \frac{ \int_{1/s}^\infty u^{-1} \ell(u)\mathrm du}{\ell_1(1/s)} = \delta  \lim_{s\to0^+} \frac{  \ell_1(1/s)}{\ell_1(1/s)} =\delta,
\end{eqnarray*}
and hence
$$
\lim_{s\to0^+} \frac{\nu^1_P([0,s])}{\ell_1(1/s)} =0.
$$
When $\alpha=0$ we have 
$$
\boldsymbol\nu_P^q([0,s]) = \int_{[0,s]} x^{q-1} \boldsymbol\nu_P^1(\mathrm dx) \le  s^{q-1}\boldsymbol\nu^1([0,s]),
$$
and hence
$$
\lim_{s\to0^+} \frac{\boldsymbol\nu_P^q([0,s]) }{s^q\ell(1/s)}\le \lim_{s\to0^+} \frac{s^{q-1}\boldsymbol\nu_P^1([0,s]) }{s^q\ell(1/s)} = \lim_{s\to0^+} \frac{\boldsymbol\nu_P^1([0,s]) }{s\ell(1/s)}=  0.
$$

From here a version of Karamata's Tauberian Theorem (Theorem 1.7.1' in \cite{Bingham:Goldie:Teugels:1987}) implies that for $\alpha\in[0,1)$ or $\alpha=1$ and $q\ge2$
\begin{eqnarray*}
\lim_{n\to\infty}\frac{\Phi_q(n)}{n^{\alpha}\ell(n)} = \frac{\int_0^1 e^{-nx} \boldsymbol\nu_P^q(\mathrm d x)}{q! n^{\alpha-q}\ell(n)}=0.
\end{eqnarray*}
and the corresponding result hold for the case $\alpha=1$ and $r=1$. 
From here, since $(n+1)^{\alpha-1}\ell(n+1)\sim n^{\alpha-1}\ell(n)$, and $\ell_1(n+1)\sim\ell_1(n)$, 
we can use Lemma 2 in \cite{GZ17} to complete the result.
\end{proof}

\section*{Acknowledgements}

The work of M.~Kelbert and Q.~Paris has been funded by the Russian Academic Excellence Project 5-100. The work of M.~Grabchak is supported, in part, by the Russian Science Foundation (Project \ftextnumero\ 17-11-01098).


\begin{thebibliography}{25}
\providecommand{\natexlab}[1]{#1}
\providecommand{\url}[1]{\texttt{#1}}
\expandafter\ifx\csname urlstyle\endcsname\relax
  \providecommand{\doi}[1]{doi: #1}\else
  \providecommand{\doi}{doi: \begingroup \urlstyle{rm}\Url}\fi

\bibitem[Ben-Hamou et~al.(2016)Ben-Hamou, Boucheron, and Gassiat]{BeBoGa16}
A.~Ben-Hamou, S.~Boucheron, and E.~Gassiat.
\newblock Pattern coding meets censoring: (almost) adaptive coding on countable
  alphabets.
\newblock \emph{arXiv:1608.08367}, 2016.

\bibitem[Ben-Hamou et~al.(2017)Ben-Hamou, Boucheron, and Ohannessian]{BBO15}
A.~Ben-Hamou, S.~Boucheron, and M.~I. Ohannessian.
\newblock Concentration inequalities in the infinite urn scheme for occupancy
  counts and the missing mass, with applications.
\newblock \emph{Bernoulli}, 23\penalty0 (1):\penalty0 249--287, 2017.

\bibitem[Bingham et~al.(1987)Bingham, Goldie, and
  Teugels]{Bingham:Goldie:Teugels:1987}
N.~H. Bingham, C.~M. Goldie, and J.~L. Teugels.
\newblock \emph{Regular Variation}.
\newblock Encyclopedia of Mathematics And Its Applications. Cambridge
  University Press, Cambridge, 1987.

\bibitem[Bubeck et~al.(2013)Bubeck, Ernst, and Garivier]{BuErGa13}
S.~Bubeck, D.~Ernst, and A.~Garivier.
\newblock Optimal discovery with probabilistic expert advice: finite time
  analysis and macroscopic optimality.
\newblock \emph{Journal of Machine Learning Research}, 14:\penalty0 601--623,
  2013.

\bibitem[Chao(1981)]{C81}
A.~Chao.
\newblock On estimating the probability of discovering a new species.
\newblock \emph{The Annals of Statistics}, 9:\penalty0 1339--1342, 1981.

\bibitem[Chen and Goodman(1999)]{CG99}
S.~F. Chen and J.~Goodman.
\newblock An empirical study of smoothing techniques for language modeling.
\newblock \emph{Computer Speech and Language}, 13:\penalty0 359--394, 1999.

\bibitem[Decrouez et~al.(2018)Decrouez, Grabchak, and Paris]{DGP16}
G.~Decrouez, M.~Grabchak, and Q.~Paris.
\newblock Finite sample properties of the mean occupancy counts and
  probabilities.
\newblock \emph{Bernoulli}, 24\penalty0 (3):\penalty0 1910--1941, 2018.

\bibitem[Efron and Thisted(1976)]{ET76}
B.~Efron and R.~Thisted.
\newblock Estimating the number of unseen species: How many words did
  {S}hakespeare know?
\newblock \emph{Biometrika}, 63:\penalty0 435--447, 1976.

\bibitem[Gandolfi and Sastri(2004)]{GS04}
A.~Gandolfi and C.~C.~A. Sastri.
\newblock Nonparametric estimations about species not observed in a random
  sample.
\newblock \emph{Milan Journal of Mathematics}, 72\penalty0 (1):\penalty0
  81--105, 2004.

\bibitem[Glynn and Ormoneit(2002)]{Glynn:Ormoneit:2002}
P.~W. Glynn and D.~Ormoneit.
\newblock Hoeffding's inequality for uniformly ergodic {M}arkov chains.
\newblock \emph{Statistics and Probability Letters}, 56\penalty0 (2):\penalty0
  143--146, 2002.

\bibitem[Gnedin et~al.(2007)Gnedin, Hansen, and Pitman]{GHP07}
A.~Gnedin, B.~Hansen, and J.~Pitman.
\newblock Notes on the occupancy problem with infinitely many boxes: general
  asymptotics and power laws.
\newblock \emph{Probability Surveys}, 4:\penalty0 146--171, 2007.

\bibitem[Good(1953)]{G53}
I.~J. Good.
\newblock The population frequencies of species and the estimation of
  population parameters.
\newblock \emph{Biometrika}, 40\penalty0 (3/4), 1953.

\bibitem[Good and Toulmin(1956)]{GT56}
I.~J. Good and G.~H. Toulmin.
\newblock The number of new species, and the increase in population coverage,
  when a sample is increased.
\newblock \emph{Biometrika}, 43:\penalty0 45--63, 1956.

\bibitem[Grabchak and Zhang(2017)]{GZ17}
M.~Grabchak and Z.~Zhang.
\newblock Asymptotic properties of {T}uring's formula in relative error.
\newblock \emph{Machine Learning}, 106\penalty0 (11):\penalty0 1771--1785,
  2017.

\bibitem[Johnson and Kotz(1977)]{Johnson:Kotz:1977}
N.~L. Johnson and S.~Kotz.
\newblock \emph{Urn Models and Their Application}.
\newblock Wiley, 1977.

\bibitem[Karlin(1967)]{K67}
S.~Karlin.
\newblock Central limit theorems for certain infinite urn schemes.
\newblock \emph{Journal of Mathematics and Mechanics}, 17:\penalty0 373--401,
  1967.

\bibitem[Mao and Lindsay(2002)]{ML02}
C.~X. Mao and B.~G. Lindsay.
\newblock A {P}oisson model for the coverage problem with a genomic
  application.
\newblock \emph{Biometrika}, 89:\penalty0 669--681, 2002.

\bibitem[Ohannessian and Dahleh(2012)]{OD12}
M.~I. Ohannessian and M.~A. Dahleh.
\newblock Rare probability estimation under regularly varying heavy tails.
\newblock In \emph{Proceedings of the 25th Annual Conference on Learning
  Theory}, volume~23, pages 21.1--21.24, 2012.

\bibitem[Orlitsky et~al.(2004)Orlitsky, Santhanam, and Zhang]{OSZ04}
A.~Orlitsky, N.~P. Santhanam, and J.~Zhang.
\newblock Universal compression of memoryless sources over unknown alphabets.
\newblock \emph{IEEE Transactions on Information Theory}, 50\penalty0
  (7):\penalty0 1469--1481, 2004.

\bibitem[Paulin(2015)]{Paulin:2015}
D.~Paulin.
\newblock Concentration inequalities for {M}arkov chains by {M}arton couplings
  and spectral methods.
\newblock \emph{Electronic Journal of Probability}, 20\penalty0 (79):\penalty0
  1--32, 2015.

\bibitem[Resnick(2007)]{Resnick:2007}
S.~I. Resnick.
\newblock \emph{Heavy-Tail Phenomena: Probabilistic and Statistical Modeling}.
\newblock Springer, 2007.

\bibitem[Roberts and Rosenthal(2004)]{Roberts:Rosenthal:2004}
G.~O. Roberts and J.~S. Rosenthal.
\newblock General state space {M}arkov chains and {MCMC} algorithms.
\newblock \emph{Probability Surveys}, 1:\penalty0 20--71, 2004.

\bibitem[Thisted and Efron(1987)]{TE87}
R.~Thisted and B.~Efron.
\newblock Did {S}hakespeare write a newly discovered poem.
\newblock \emph{Biometrika}, 74:\penalty0 445--455, 1987.

\bibitem[Zhang(2005)]{Z05}
C.~H. Zhang.
\newblock Estimation of sums of random variables: Examples and information
  bounds.
\newblock \emph{The Annals of Statistics}, 33:\penalty0 2022--2041, 2005.

\bibitem[Zhang and Huang(2007)]{ZH07}
Z.~Zhang and H.~Huang.
\newblock Turing's formula revisited.
\newblock \emph{Journal of Quantitative Linguistics}, 14\penalty0
  (2-3):\penalty0 222--241, 2007.

\end{thebibliography}
\end{document}